\newcommand{\defmeso}[2]{\newsubsupcommand{#1}{#2}[][\varepsilon]}
\newcommand{\defmesoi}[2]{\newsubsupcommand{#1}{#2}[i][\varepsilon]}
\newcommand\tempBase{\theta}
\newcommand\collBase{u}
\newcommand\depBase{v}
\defmeso{\temper}{\tempBase}
\defmeso{\coller}{\collBase}
\defmeso{\deper}{\depBase}
\defmesoi{\colli}{u}
\defmesoi{\depi}{v}
\defmeso{\cond}{\kappa}
\defmeso{\diff}{d}
\defmeso{\soret}{\tau}
\defmeso{\dufour}{\delta}
\newcommand{\bcoll}{\bar{\collBase}}
\newcommand{\btemp}{\bar{\tempBase}}
\newcommand{\coag}{\beta}
\newcommand{\cflux}{g}
\defmeso{\dca}{a}
\defmeso{\dcb}{b}
\newcommand{\loa}{{L^1(\A)}}
\newcommand{\lta}{{L^2(\A)}}
\newcommand{\lfa}{{L^4(\A)}}
\newcommand{\lia}{{L^\infty(\A)}}
\newcommand{\liap}{{L^\infty_+(\A)}}
\newcommand{\ha}{{H^1(\A)}}
\newcommand{\ltha}{{L^2(0,T;H^1(\A))}}
\newcommand{\ltlta}{{L^2(0,T;L^2(\A))}}
\newcommand{\lilia}{{L^\infty((0,T)\times\A)}}
\newcommand{\lilta}{{L^\infty(0,T;L^2(\A))}}
\newcommand{\liha}{{L^\infty(0,T;H^1(\A))}}
\newcommand{\hlta}{{H^1(0,T;L^2(\A))}}
\newcommand{\ltr}{{L^2(\A[Robin])}}
\newcommand{\ltg}{{L^2(\A[Grain])}}
\newcommand{\ligp}{{L^\infty_+(\G)}}
\newcommand{\lilig}{{L^\infty((0,T)\times\G)}}
\newcommand{\liltg}{{L^\infty(0,T;L^2(\G))}}
\newcommand{\hltg}{{H^1(0,T;L^2(\G))}}
\newcommand\weakTo{\rightharpoonup}
\newcommand\twoScaleTo{\stackrel{2}{\rightharpoonup}}
\newcommand\weakStarTo{\stackrel{*}{\rightharpoonup}}
\newcommand{\inta}{\int\limits_{\A}}
\newcommand{\intr}{\int\limits_{\A[Robin]}}
\newcommand{\intg}{\int\limits_{\A[Grain]}}
\newcommand{\intt}{\int\limits_0^T}
\newcommand{\sumN}{\sum_{i=1}^N}
\newcommand{\half}{\frac{1}{2}}
\newcommand{\naturalNumbers}{\mathbb{N}}
\newcommand\mspan{\mathrm{span}}
\newcommand{\Rdim}[1]{\mathbb{R}^#1}
\newcommand{\Zdim}[1]{\mathbb{Z}^#1}
\newcommand{\Runit}[1]{\vec{e}_#1}
\newcommand{\operatorT}{\mathbf{T}}
\newcommandx\newcommandWithIndex[3]{
  \constructor{#1}{#2#3}
  \set{#1}{body}{#2}
  \set{#1}{index}{#3}
}
\newcommand{\closure}[1]{
  \@ifundefined{#1@body}
  {\overline{#1}}
  {\overline{\@nameuse{#1@body}}\@nameuse{#1@index}}
}
\newcommand{\cellUnit}{Y}
\newcommand{\cellSolidBoundary}{\Gamma}
\newcommand{\surfSkeleton}{\Gamma^\varepsilon}
\newcommand{\model}{\nameref{epsmodel}}
\newcommand{\INPUT}[1]{#1}
\newcommandx{\inRange}[2][2=]{\ifstrequal{#2}{}
  {\in\{1,\ldots,#1\}}
  {\in\{#1,\ldots,#2\}}
}
\newcommand\Diff{\bar{D}}
\newcommand\intY[1]{\int_Y#1\,dy}
\newcommand\intG[1]{\int_{\Gamma}#1\,d\sigma(y)}
\newcommand\initialCondition[1]{
  \self{#1}(0,\get{#1}{domain}[d2][var])=\self{#1}^0(\get{#1}{domain}[d2][var]),&&\tdomain{#1}[2]
}
\newtheorem{theorem}{Theorem}[section]
\newtheorem{definition}{Definition}
\newtheorem{denote}{Notation}
\newtheorem{lemma}[theorem]{Lemma}
\newtheorem{remark}[theorem]{Remark}
\def\namedlabel#1#2{\begingroup#2\def\@currentlabel{#2}\phantomsection\label{#1}\endgroup}\makeatother
\title{A thermo-diffusion system with Smoluchowski interactions}
\author{Oleh Krehel and Adrian Muntean and Toyohiko Aiki}
\begin{document}
\maketitle
\centerline{\scshape Oleh Krehel}
\medskip
{\footnotesize
  \centerline{Department of Mathematics and Computer Science}
  \centerline{CASA - Center for Analysis, Scientific computing and Engineering}
  \centerline{Eindhoven University of Technology}
  \centerline{5600 MB, PO Box 513, Eindhoven, The Netherlands}
} 
\medskip
\centerline{\scshape Toyohiko Aiki}
\medskip
{\footnotesize
  \centerline{Department of Mathematical and Physical Sciences, Faculty of Science}
  \centerline{Japan Women's University, Tokyo, Japan}
}
\medskip
\centerline{\scshape Adrian Muntean}
\medskip
{\footnotesize
  \centerline{Department of Mathematics and Computer Science}
  \centerline{CASA - Center for Analysis, Scientific computing and Engineering}
  \centerline{ICMS - Institute for Complex Molecular Systems}
  \centerline{Eindhoven University of Technology}
  \centerline{5600 MB, PO Box 513, Eindhoven The Netherlands}
}
\bigskip
\begin{abstract}
  We study the solvability and homogenization of a thermal-diffusion
  reaction problem posed in a periodically perforated domain. The system
  describes the motion of populations of hot colloidal particles
  interacting together via Smoluchowski production terms.  The upscaled
  system, obtained via two-scale convergence techniques, allows the
  investigation of deposition effects in porous materials in the
  presence of thermal gradients.
\end{abstract}

\section{Introduction}
We aim at understanding processes driven by coupled fluxes through
media with microstructures. In this paper, we study a particular
type of coupling: we look at the interplay between diffusion fluxes of
a fixed number of colloidal populations and a heat flux, the effects
included here are incorporating an approximation of the Dufour ad Soret
effects (cf. Section \ref{SD}, see also \cite{groot1962non}. The type
of system of evolution equations that we encounter in Section
\ref{eqs} resembles very much cross-diffusion and chemotaxis-like
systems; see e.g. \cite{vanag2009cross,funaki2012link}. The structure
of the chosen equations is useful in investigating transport,
interaction, and deposition of a large numbers of hot multiple-sized
particles in porous media.

Practical applications of our approach would include predicting the
response of refractory concrete to high-temperatures exposure in steel
furnaces, propagation of combustion waves due to explosions in
tunnels, drug delivery in biological tissues, etc.; see
for instance \cite{benevs2013global,benevs2013analysis,rothstein2009unified,soares2010mixture,gong1996development,golestanian2012collective}.
\ifdefined\included\else
In the paper \cite{krehel2014multiscale} we study
quantitatively some of these effects, focusing on colloids deposition
under thermal gradients.
\fi
Within this framework, our focus lies exclusively on two distinct theoretical aspects:
\begin{enumerate}
\item[(i)] the mathematical understanding of the microscopic problem
  (i.e. the well-posedness of the starting system);
\item[(ii)] the averaging of the thermo-diffusion system over arrays
  of periodically-distributed microstructures (the so-called, {\em
    homogenization asymptotics limit}; see, for instance,
  \cite{bensoussan2011asymptotic,mei2010homogenization} and references cited therein).
\end{enumerate}
The complexity of the microscopic system makes numerical simulations
on the macro scale very expensive. That is the reason that the aspect
(ii) is of concern here.  Obviously, the study does not close with
these questions. Many other issues like derivation of corrector
estimates, design of efficient convergent numerical multiscale schemes,
multiscale parameter identification etc. need also to be
treated. Possible generalizations could point out to coupling heat
transfer with Nernst-Planck-Stokes systems (extending
\cite{ray2012rigorous}) or with semiconductor equations
\cite{masmoudi2007diffusion}.
The paper is structured in the following manner. We present the basic
notation and explain the multiscale geometry as well as some of the
relevant physical processes in Section \ref{NA}.
Section \ref{solvability} contains the proof of the solvability of the
microstructure model. Finally, the homogenization procedure is
performed in Section \ref{homogenization}.
The strong formulation of the upscaled thermo-diffusion model with
Smoluchowski interactions is emphasized in Section \ref{strong}.

\section{Notations and assumptions}\label{NA}

\subsection{Model description and geometry}\label{geometry}

The geometry of the problem is depicted in Figure~\ref{fig:thermal.porous}.
\begin{table}[h!]
  \begin{tabular}{ll}
    $\Time$        & $=$ time interval of interest\\
    $\Domain$      & $=$  {$(0,L) \times \cdots  \times (0,L)$} bounded domain in $\Rdim{n}$
                     {for $L > 0$} \\
    { $\varepsilon$  }& $=$  {$\frac{L}{\ell}$ for any integer $\ell$} \\
    $\partial \Domain$     & $=$ piecewise smooth boundary of $\Domain$\\
    $\Runit{i}$    & $=$ $i$th unit vector in $\Rdim{n}$\\
    $\cellUnit$    & $=\{\sum_{i=1}^n\lambda_i\Runit{i}:\: 0<\lambda_i<1\}$ unit cell in $\Rdim{n}$\\
    $\cellSolid$   & $=$ open subset of $\cellUnit$ that represents the solid grain\\
    $\cellFluid$   & $=\cellUnit \setminus  \closure{cellSolid}$ \\
    $\cellSolidBoundary$    & $=\partial \cellSolid$ piecewise smooth boundary of $\cellSolid$\\
    $X^k$          & $=X+\sum_{i=1}^nk_i\Runit{i}$, where $k=(k_1,\ldots,k_n)\in \Zdim{n}$ and $X\subset\cellUnit$\\
    $\volSkeleton$ & $=\cup\{(\varepsilon \cellSolid)^k:\: (\cellSolid)^k\subset \A, k\in \Zdim{n}\}$ pore skeleton\\
    $\volPore$     & $=\Domain\setminus\closure{volSkeleton}$ pore space\\
    $\surfSkeleton$ & $=\partial \volSkeleton$ boundary of the pore skeleton
  \end{tabular}
\end{table}
The standard cell is shown in Figure~\ref{fig:model-geometry}.

The cells regions without the grain $\varepsilon \cellFluid^{k}$ are filled with water
and we denote their union by $\volPore$.
Colloidal species are dissolved in the pore water. They react between themselves
and participate in diffusion and convective transport.
The colloidal matter cannot penetrate the grain boundary $\A[Grain]$,
but it deposits there reducing the amount of mass floating inside $\A$.
Here $\partial \A=\partial \Domain\cup \A[Grain]$, where $\A[Grain]=\A[Neumann]\cup \A[Robin]$ and $\A[Neumann]\cap \A[Robin]=\emptyset$.
The boundary $\A[Neumann]$ is insulated to the heat flow, while $\A[Robin]$ admits flux.

\begin{figure}[h!]
  \includegraphics[width=200pt]{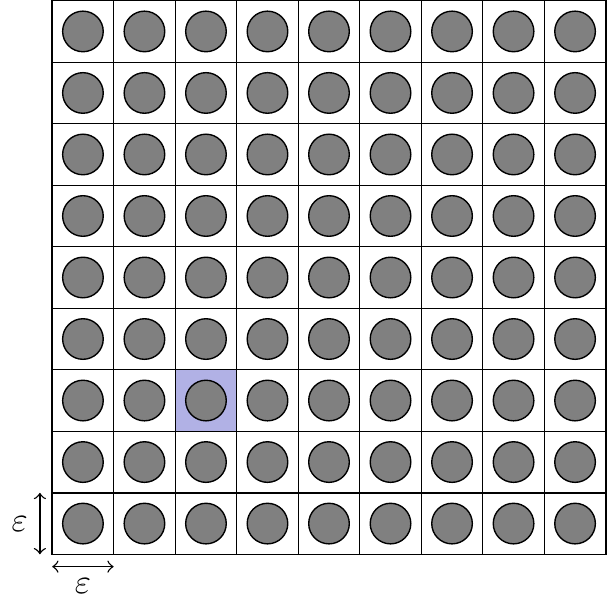}
  \caption{Porous medium geometry {
      $\Omega^{\varepsilon} = \Omega \backslash \Omega_0^{\varepsilon}$,}
    where the pore skeleton $\volSkeleton$ is marked with gray color and the pore space $\Omega^{\varepsilon}$ is white. }
  \label{fig:thermal.porous}
\end{figure}
\begin{figure}[h!]
  \begin{center}
    \includegraphics[width=200pt]{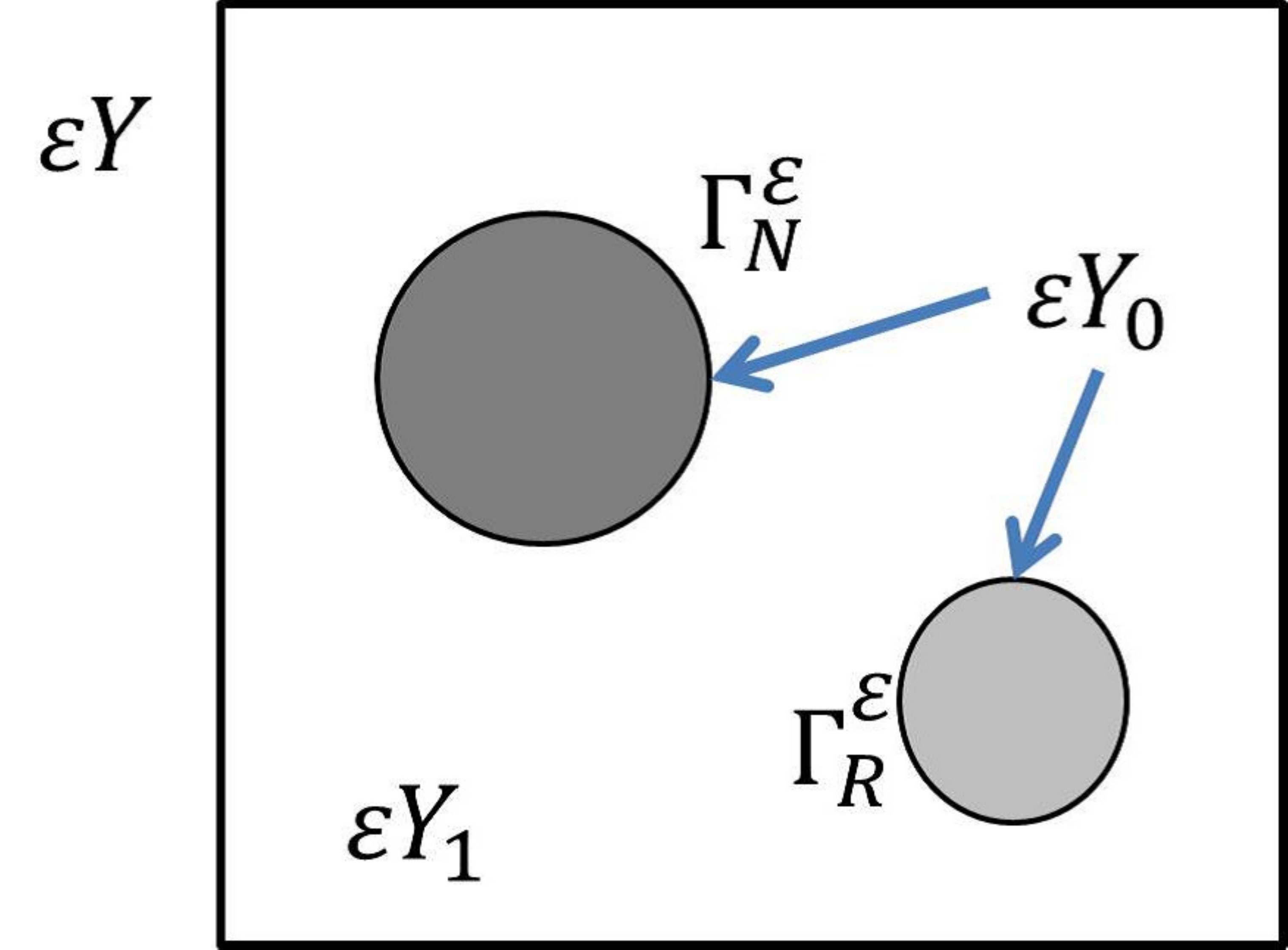}
  \end{center}
  \caption{ The unit cell geometry. The colloidal species $\coll_i$ and
    temperature $\temp$ are defined in { $\Omega^{\varepsilon}$ , while the
      deposited species $\dep_i$ are defined on $\Gamma^{\varepsilon} =  \Gamma_R^{\varepsilon}  \cup  \Gamma_N^{\varepsilon}$.
      The boundary conditions for $\temp$ differ on
      $\Domain[URobin]$ and $\Domain[UNeumann]$, while the boundary
      conditions for $\coll_i$ are uniform on $\Gamma^{\varepsilon}$. }  }
  \label{fig:model-geometry}
\end{figure}
\newpage
The unknowns are:
\begin{itemize}
\item $\temp$ -- the temperature in $\A$.
\item $\coll_i$ -- the concentration of the species that contains $i$ monomers in $\A$.
\item $\dep_i$ -- the mass of the deposited species on $\G$.
\end{itemize}

Furthermore, for a given $\delta>0$ we introduce the mollifier:
\begin{align}
  \label{eq:thermal.mollifier}
  &J_\delta(s):=
    \begin{cases}
      Ce^{1/(|s|^2-\delta^2)} & \text{if } |s| < \delta,\\
      0                   & \text{if } |s| \ge  \delta,
    \end{cases}
                            \intertext{where the constant $C>0$ is selected such that}
                            \nonumber
                              &\int_{\Rdim{d}}J_\delta=1,
\end{align}
see \cite{evans1998partial} for details.

Using $J_\delta$ from (\ref{eq:thermal.mollifier}), define the mollified gradient:
\begin{equation}
  \label{eq:thermal.mollified-gradient}
  \nabla^\delta f:=\nabla
  \left[
    \int_{B(x,\delta)}J_\delta(x-y)f(y)dy
  \right].
\end{equation}
The following statement holds for all { $1\le p\le \infty$:
  \begin{align}
    \label{eq:thermal.mollified-gradient-property}
    &\|\nabla ^\delta f \cdot  g\|_{\Lp{A}} \le  c^\delta \|f\|_{\Li{A}} \|g\|_{\Lp{A}^d}
      \mbox{ for } f \in L^{\infty}(\Omega^{\varepsilon}), g \in L^{p}(\Omega^{\varepsilon})^d,
    \\
    &\|\nabla ^\delta f\|_{\Lp{A}} \le  c^\delta\|f\|_\lta
      \mbox{ for } f \in L^{2}(\Omega^{\varepsilon}).
  \end{align}
}
In the equations below all norms are $\Lt{A}$ unless specified otherwise,
with $c^\delta$ independent of the choice of $\varepsilon$.

\subsection{Smoluchowski population balance equations}\label{S}

We want to model the transport of aggregating colloidal particles
under the influence of thermal gradients.  We use the Smoluchowski
population balance equation, originally proposed in
\cite{smoluchowski1917versuch}, to account for colloidal aggregation:
\begin{align}
  \label{eq:aggregation-Smoluchowski}
  R_i(s):=\half\sum_{k+j=i}\coag_{kj}s_ks_j
  -\sum_{j=1}^N\coag_{ij}s_is_j, &&i\inRange{N};\:N>2.
\end{align}
\ifdefined\included
Equation (\ref{eq:aggregation-Smoluchowski}) is explained in detail in Chapter \ref{ch:modeling}.
\fi
Here $s_i$ is the concentration of the colloidal species that consists of $i$ monomers,
$N$ is the number of species, i.e. the maximal aggregate size that we consider,
$R_i(s)$ is the rate of change of $s_i$,
and $\coag_{ij}>0$ are the coagulation coefficients, which tell us the rate aggregation
between particles of size $i$ and $j$ \cite{elimelech1998particle}.
Colloidal aggregation rates are described in more detail in \cite{krehel2012flocculation}.


\subsection{Soret and Dufour effects}\label{SD}
\index{Soret effect}
\index{Dufour effect}
The system we have in mind is inspired by the model proposed by Shigesada, Kawasaki
and Teramoto \cite{shigesada1979spatial} in 1979 when they have studied
the segregation of competing species.  For the case of two interacting species $u$
and $v$, the diffusion term looks like:
\begin{equation}
  \label{eq:cross-diffusion-Shigesada}
  \partial _t u=\Delta(d_1u+\alpha uv),
\end{equation}
where the second term in the flux is due to cross-diffusion. The second term can
be expressed as:
\begin{equation}
  \label{eq:laplacion-of-product}
  \Delta(uv)=u\Delta v+v\Delta u+2\nabla u\cdot\nabla v.
\end{equation}
As a first step in our approach, we consider only the last term of
(\ref{eq:laplacion-of-product}), i.e. $\nabla u\cdot \nabla v$, as the driving force of
cross-diffusion and we postpone the study of terms $u\Delta v$ and $v\Delta u$
until later.

From mathematical point of view, still it is not easy to treat the term $\nabla u\cdot \nabla v$.
Hence, in the paper we approximate this term by $\nabla ^\delta u\cdot \nabla v$ for $\delta>0$.


\subsection{Setting of the model equations} \label{eqs}

\newcommand\eqTemperature[2]{
  \partial _t\self{#1} + \nabla \cdot (-\flux{#1}) - \soret\sumN \nabla ^\delta#2_i\cdot \nabla \self{#1} = 0
}
\newcommandx\alignEq[4][4=]{
  #1{#2}{#3}{#4}, && \tdomain{#2}
}
\newcommand\eqColloids[3]{
  \partial _t\self{#1} + \nabla \cdot (-\flux{#1}) - \dufour_i \nabla ^\delta#2\cdot \nabla \self{#1} = #3
}
\newcommand\eqDeposit[2]{
  \partial _t\self{#1}= a_i#2- b_i\self{#1}
}
We consider the following balance equations for the temperature and colloid concentrations:
\begin{description}
\item[\namedlabel{epsmodel}{\bf ($P^\varepsilon$)}]
  \begin{align}
    \label{eq:main-temp}
    &\alignEq{\eqTemperature}{temp}{\coll},\\
    \label{eq:main-coll}
    &\alignEq{\eqColloids}{coll_i}{\temp}[R_i(\coll)],\\
    \intertext{with boundary conditions:}
    \label{eq:bc-temp-neumann}
    & -\flux{temp}\cdot {\nu} =0,                                  &&\tdomain{temp}[Neumann],\\
    \label{eq:bc-temp-robin}
    & -\flux{temp}\cdot {\nu} =\varepsilon \cflux_0\temp,                        &&\tdomain{temp}[Robin],\\
    \label{eq:bc-temp-outer}
    & -\flux{temp}\cdot {\nu} =0,                             &&\text{on }\partial \Domain,\\
    \label{eq:bc-coll-outer}
    & -\flux{coll_i}\cdot {\nu} =0,                           &&\text{on }\partial \Domain,\\
    \intertext{ {where ${\nu}$ is the outward normal vector on the boundary} and a boundary condition for colloidal deposition:}
    \label{eq:bc-coll-grain}
    & -\flux{coll_i}\cdot {\nu} =\varepsilon (a_i\coll_i- b_i\dep_i),        &&\tdomain{coll_i}[Grain],\\
    \label{eq:main-dep}
    & \partial_t v_i^{\varepsilon} = a_i u_i^{\varepsilon} - b_i v_i^{\varepsilon}  &&\tdomain{coll_i}[Grain].
    \\
    \intertext{As initial conditions, we take for $i\inRange{N}$:}
    \label{eq:ic-temp}   & \initialCondition{temp},\\
    \label{eq:ic-coll} & \initialCondition{coll_i},\\
    \label{eq:ic-dep}       & \initialCondition{dep_i}.
  \end{align}
\end{description}
\begin{table}[h!]
  \caption{Physical parameters of \nameref{epsmodel}.}
  \begin{tabular}{ll}
    $\cond$   & heat conduction coefficient\\
    $\diff_i$ & diffusion coefficient\\
    $\soret$  & Soret coefficient\\
    $\dufour$ & Dufour coefficient\\
    $\cflux_i$ & Robin boundary coefficient, $i\inRange{0}[N]$\\
    $a_i$ & Deposition coefficient 1, $i\inRange{N}$\\
    $b_i$ & Deposition coefficient 2, $i\inRange{N}$\\
  \end{tabular}
\end{table}

We refer to  (\ref{eq:main-temp})-~(\ref{eq:ic-dep}) as \nameref{epsmodel} -- our reference microscopic model.
Note that the Soret and Dufour coefficients determine the structure of the particular
cross-diffusion system (see \cite{groot1962non},
\cite{shigesada1979spatial} \cite{andreianov2011analysis},
\cite{benevs2013global}, \cite{ni1998diffusion},
\cite{vanag2009cross}).
The coefficients $a_i$ and $b_i$ describe the deposition interaction
between $\coll_i$ and $\dep_i$.
Consequently, each $\coll_i$ has a different affinity to sediment as well as a different mass.

All functions defined in $\volPore$  are taken to be $\varepsilon$-periodic,
i.e. $\cond(x)=\kappa(x/\varepsilon)$ and so on.

Note the use of the mollified gradient in the cross diffusion terms in
(\ref{eq:main-temp}) and (\ref{eq:main-coll}). This is a choice that
we have to make at this point in order to obtain the necessary
estimates for our equations. From a physical point of view, smoothed
gradients causing advection can be interpreted as there being no turbulence.

\subsection{Assumptions on data}

\begin{description}
\item[\namedlabel{assumption:1}{\bf ($A_1$)}]
  {
    $\kappa$, $\tau$, $d_i$, $\delta_i \in L^{\infty}(Y)$ for each  $i\inRange{N}$.
    Moreover, $\kappa_0 \leq \kappa \leq \kappa_*$, $\tau \leq \tau_*$, $d_0 \leq d_i \leq d_*$,
    $\delta_i \leq \delta_*$ on $Y$  for $i\inRange{N}$, where $\kappa_0, \kappa_*, d_0,
    d_*$ and $\delta_*$  are positive constants. Also,
    $a_i$ and $b_i$ are positive constants for   $i\inRange{N}$, and we put
    $a_0 = \min(a_1, a_2, \ldots, a_N)$, $a_* = \max(a_1, a_2, \ldots, a_N)$, and
    $b_* = \max(b_1, b_2, \ldots, b_N)$.
  }

\item[\namedlabel{assumption:2}{\bf ($A_2$)}]
  $\temp^0\in \liap\cap { H^1(\Omega^\varepsilon)},$
  $\coll_i^0\in \liap\cap { H^1(\Omega^\varepsilon)},$
  $\dep_i^0\in\ligp$ for $i\inRange{N}$ and $\varepsilon >0$.
  Moreover, { $|| \temp^0||_{H^1(\Omega^\varepsilon)}\leq C_0$},
  {$||\coll_i^0||_{H^1(\Omega^\varepsilon)}\leq C_0$}, and
  $||\dep_i^0||_{L^\infty(\Gamma^\varepsilon)} \leq C_0$ for $i\inRange{N}$
  and $\varepsilon >0$. Here $C_0$ is a positive constant independent of $\varepsilon $.
  {Also, $ \liap = \{z \in L^{\infty}(\Omega^{\varepsilon}): z \geq 0 \mbox{ a.e. on } \Omega^{\varepsilon} \}$ and
    $ \ligp = \{z \in L^{\infty}(\Gamma^{\varepsilon}): z \geq 0 \mbox{ a.e. on } \Gamma^{\varepsilon} \}$.   }
\end{description}
\newcommand\allAssumptions{\nameref{assumption:1}-\nameref{assumption:2}}



\section{Global solvability of problem $(P^\varepsilon)$}\label{solvability}

\newcommand\eqTemperatureWeak[3]{
  \int[A] \partial _t#1#3 + \int[A] \cond\nabla #1\cdot \nabla #3 + {\varepsilon}   \cflux_0\intr#1#3
  =\sumN\inta \soret  \nabla^\delta#2\cdot \nabla #1#3
}
\newcommand\eqColloidsWeak[4]{
  &\int[A] \partial _t#1#4 + \int[A] \diff_i\nabla #1\cdot \nabla #4
  + { \varepsilon} \intg (a_i#1- b_i\dep_i)#4\\
  &\qquad=\int[A]  \dufour_i \nabla ^\delta #2\cdot \nabla #1#4
  +\int[A] #3#4
}
\newcommand\eqDepositWeak[3]{
  \intg \partial _t#1 #3= \intg (a_i#2- b_i#1)#3
}
\begin{definition}
  \label{def:weak-solution}
  The triplet $(\temp,\coll_i,\dep_i)$ is a solution to problem \model
  if the following holds:
  \begin{align}
    &\begin{aligned}
      &\temp,\coll_i\in \hlta \cap  \liha \cap  \lilia,\\
      &\dep_i\in \hltg \cap  \lilig,\\
    \end{aligned}
    \intertext{for all $\phi\in \ha:$} &\eqTemperatureWeak{\temp}{\coll_i}{\phi},\\
    \intertext{for all $\psi_i\in \ha:$}
    &\begin{aligned}
      \eqColloidsWeak{\coll_i}{\temp}{R_i(\coll)}{\psi_i},
    \end{aligned}\\
    \intertext{for all $\varphi_i\in \ltg:$}
    &\eqDepositWeak{\dep_i}{\coll_i}{\varphi_i},
  \end{align}
  together with (\ref{eq:ic-temp}), (\ref{eq:ic-coll}) and (\ref{eq:ic-dep})
  for a fixed value of $\varepsilon >0$.
\end{definition}
\begin{remark}
  We note that each term appearing in Definition \ref{def:weak-solution} is finite, since
  $\nabla ^\delta\coll_i$ and $\nabla ^\delta\temp$ are bounded in $\A$ due to (\ref{eq:thermal.mollified-gradient-property}).
\end{remark}
To prove the existence of solutions to problem \model, we introduce the following auxiliary problems
as iterations steps of the coupled system:
\begin{description}
\item[\namedlabel{auxiliary:1}{\bf ($P_1$)}]
  \begin{align*}
    &\alignEq{\eqTemperature}{temp}{\bcoll},\\
    & -\flux{temp}\cdot {\nu} =0,                                &&\tdomain{temp}[Neumann],\\
    & -\flux{temp}\cdot {\nu} =\varepsilon\cflux_0\temp,                      &&\tdomain{temp}[Robin],\\
    & -\flux{temp}\cdot {\nu} =0,                                  &&  { \mbox{on } (0,T) \times \partial \Omega},\\
    & \initialCondition{temp},
  \end{align*}
\end{description}
and
\begin{description}
\item[\namedlabel{auxiliary:2}{\bf ($P_2$)}]
  \begin{align*}
    & \alignEq{\eqColloids}{coll_i}{\btemp}[R_i^M(\coll)], \\
    & -\flux{coll_i}\cdot {\nu} =0,                             &&{ \mbox{on } (0,T) \times \partial \Omega}, \\
    & -\flux{coll_i}\cdot {\nu} =\varepsilon (a_i\coll_i- b_i\dep_i),       &&\tdomain{coll_i}[Grain],\\
    & \initialCondition{coll_i},\\
    & \alignEq{\eqDeposit}{dep_i}{\coll_i},\\
    & \initialCondition{dep_i}.
  \end{align*}
\end{description}
Here
\begin{align}
  \label{eq:coll-rhs-bounded}
  &R_i^M(s):=R_i(\sigma_M(s_1), \sigma_M(s_2), \ldots, \sigma_M(s_N)),\text{ for }s\in\Rdim{N}\\
  \intertext{denotes our choice of truncation of $R_i$, where}
  \label{eq:sigmam}
  &\sigma_M(r):=
    \begin{cases}
      0,&r<0, \\
      r,&r\in [0,M], \\
      M,&r>M,
    \end{cases}
\end{align}
where $M>0$ is a fixed threshold. Note that if $M$ is large enough, the essential bounds
obtained later in this paper will remain below $M$. This means that the existence result
is obtained also for the uncut rates.

In the following, assuming \nameref{assumption:1}-\nameref{assumption:2},
we show the existence, positivity and boundedness
of solutions to \nameref{auxiliary:1} and \nameref{auxiliary:2}.

When we denote the solutions of $P_1(\bcoll)$ by  $\temp$ and of $P_2(\btemp)$ by
$(\coll_i,\dep_i)$, respectively, we can define the solution operators
$(\temp,\coll_i) =\mathbf{T}(\btemp,\bcoll_i)$ and $\dep_i = \mathbf{T}_2(\btemp,\bcoll_i)$.
We will show that the operator $\mathbf{T}$ is a contraction in the appropriate functional spaces
and use the Banach fixed point theorem
to prove the existence and uniqueness of solutions to \model.
\begin{denote}
  Let $K(T,M):=\{z \in  \ltlta:\: |z| \le  M \text{ a.e. on }\TA\}$.
\end{denote}
\begin{lemma}{\bf Existence of solutions to \nameref{auxiliary:1}.}
  \label{lemma:P1-existence}\\
  \INPUT{
    Let $\bcoll_i \in  K(T,M)$,
    and assume that \nameref{assumption:1}-\nameref{assumption:2} hold.
  }  Then there exists $\temp \in  \hlta \cap  \liha$ that solves \nameref{auxiliary:1} in the sense:
  \begin{align}
    \intertext{for all $\phi\in \ha$ and a.e. in $[0,T]$:}
    \label{eq:P1-weak}
    &\eqTemperatureWeak{\temp}{\bcoll_i}{\phi},\\
    \intertext{and}
    \label{eq:P1-initial}
    &\temp(0,x)=\temp^0(x) \qquad \text{a.e. in $\A$.}
  \end{align}
\end{lemma}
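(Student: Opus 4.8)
The plan is to regard \nameref{auxiliary:1} for what it is --- a \emph{linear} parabolic initial--boundary value problem for $\temp$ in which $\bcoll_i$ enters only as prescribed data --- and to produce a solution by the Faedo--Galerkin method. The one structural fact that makes this routine is that the advection coefficient $\nabla^\delta\bcoll_i$ is bounded in $\Li{A}$ uniformly in time: since $\bcoll_i\in K(T,M)$ we have $|\bcoll_i|\le M$ a.e.\ on $\TA$, so by (\ref{eq:thermal.mollified-gradient-property}) the functional $g\mapsto\nabla^\delta\bcoll_i\cdot g$ is bounded on $\lta$ with norm at most $c^\delta M$ for a.e.\ $t$. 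Thus \nameref{auxiliary:1} is a linear parabolic equation with essentially bounded, measurable-in-time coefficients.

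First I would write the weak form (\ref{eq:P1-weak}) as $\inta\partial_t\temp\,\phi+a(t;\temp,\phi)=0$ with
\[
a(t;\theta,\phi)=\inta\cond\nabla\theta\cdot\nabla\phi+\varepsilon\cflux_0\intr\theta\phi-\sumN\inta\soret\,(\nabla^\delta\bcoll_i\cdot\nabla\theta)\,\phi .
\]
Boundedness of $a(t;\cdot,\cdot)$ on $\ha\times\ha$ follows from $\cond\le\kappa_*$ (cf.\ \nameref{assumption:1}), the trace theorem applied to the Robin term (for fixed $\varepsilon$), and the mollified-gradient bound for the last term. For coercivity I estimate
\[
\Big|\sumN\inta\soret\,(\nabla^\delta\bcoll_i\cdot\nabla\theta)\,\phi\Big|\le N\tau_*c^\delta M\,\|\nabla\theta\|\,\|\phi\|\le\tfrac{\kappa_0}{2}\|\nabla\theta\|^2+C_M\|\phi\|^2
\]
by Young's inequality, and --- using $\cond\ge\kappa_0$ together with $\cflux_0\ge0$ (the Robin term being in any case lower order, controllable by a trace--interpolation inequality) --- obtain the G\r{a}rding inequality $a(t;\phi,\phi)+(C_M+\tfrac{\kappa_0}{2})\|\phi\|^2\ge\tfrac{\kappa_0}{2}\|\phi\|_{\ha}^2$.

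Concretely, I would fix a Hilbert basis $\{w_k\}$ of $\ha$, look for $\theta_m(t)=\sum_{k=1}^m c_k^m(t)\,w_k$ solving the projected linear ODE system (solvable by Carath\'eodory's theorem, its coefficients being bounded and measurable in $t$), and derive two energy estimates. Testing with $\theta_m$, the G\r{a}rding inequality and Gronwall's lemma give a bound in $\lilta\cap\ltha$. To reach the regularity asserted in the lemma I then test with $\partial_t\theta_m$: since $\cond$ is time-independent the diffusion term equals $\tfrac12\frac{d}{dt}\inta\cond|\nabla\theta_m|^2$ and the Robin term equals $\tfrac{\varepsilon\cflux_0}{2}\frac{d}{dt}\|\theta_m\|_{\ltr}^2\ge0$, while the advection term is absorbed by Young's inequality as $\tfrac12\|\partial_t\theta_m\|^2+C\|\nabla\theta_m\|^2$. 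Integrating in time, bounding the initial diffusion energy through $\temp^0\in\ha$ from \nameref{assumption:2}, and applying Gronwall yields a uniform bound in $\hlta\cap\liha$.

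The passage to the limit $m\to\infty$ is the easy part here, precisely because the equation is linear in $\temp$ with fixed coefficients: weak compactness in $\hlta$ and weak-$*$ compactness in $\liha$ extract a limit $\temp$, and each term of (\ref{eq:P1-weak}) is weakly continuous, so no compactness or strong-convergence argument is needed to identify the limit as a weak solution; uniqueness (needed later to define the operator $\mathbf{T}$) follows from the same energy estimate applied to the difference of two solutions. The embedding $\hlta\hookrightarrow C([0,T];\lta)$ finally gives meaning to, and lets one verify, the initial condition (\ref{eq:P1-initial}). I expect the only delicate step to be the second energy estimate: one must justify the test with $\partial_t\theta_m$ (legitimate at the Galerkin level) and keep the sign of the Robin boundary term under control --- exactly where $\cflux_0\ge0$ and $\temp^0\in\ha$ are used --- whereas everything else is standard linear parabolic theory.
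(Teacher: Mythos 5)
Your proposal is correct and follows essentially the same route as the paper: a Faedo--Galerkin scheme in a basis of $H^1(\Omega^\varepsilon)$, a linear ODE system for the coefficients, a first energy estimate from testing with $\theta_n$, a second from testing with $\partial_t\theta_n$ (justified, as you note, because $\partial_t\theta_n$ stays in the Galerkin subspace), and then weak/weak-$*$ compactness plus linearity to pass to the limit. The only differences are cosmetic: you absorb the cross-diffusion term using the bound $\|\nabla^\delta\bar{u}_i^\varepsilon\cdot\nabla\theta\|_{L^2}\le c^\delta M\|\nabla\theta\|_{L^2}$ directly from (\ref{eq:thermal.mollified-gradient-property}), where the paper instead takes an $L^4$--$L^4$ H\"older estimate combined with Gagliardo--Nirenberg (your version is simpler and equally valid), and you recover the initial condition from the embedding $H^1(0,T;L^2)\hookrightarrow C([0,T];L^2)$ rather than via the Aubin--Lions lemma.
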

\begin{proof}
  Let \{$\xi_i$\} be a Schauder basis of $\ha$.
  Then for each $n \in  \naturalNumbers$ there exists
  \begin{equation}
    \label{eq:P1-Galerkin-initial}
    \temp_n^0(x):=\sum_{j=1}^n\alpha_j^{0,n}\xi_j(x)
    \text{ such that }
    \temp_n^0\to\temp^0\text{ in }\ha
    \text{ as } n\to\infty.
  \end{equation}
  We denote by $\temp_n$ the Galerkin approximation of $\temp$, that is:
  \begin{align}
    \label{eq:P1-Galerkin-approximation}
    &\temp_n(t,x):=\sum_{j=1}^n\alpha_j^n(t)\xi_j(x) && \forAll{TA}.
  \end{align}
  By definition, $\temp_n$ must satisfy (\ref{eq:P1-weak}) for all $\phi\in \mspan\{\xi_j\}_{j=1}^n$, i.e.:
  \begin{align}
    \label{eq:P1-weak-finite}
    &\eqTemperatureWeak{\temp_n}{\bcoll_i}{\phi}.
  \end{align}
  The coefficients $\alpha_i^n(t)$ can be found
  by testing (\ref{eq:P1-weak-finite}) with $\phi:=\xi_i$
  and using (\ref{eq:P1-Galerkin-initial}) to solve the resulting ODE system:
  \begin{align}
    \label{eq:P1-ODE}
    &\partial _t\alpha_i^n(t)+\sum_{j=1}^n(A_{ij}+B_{ij}-C_{ij})\alpha_j^n(t)=0,&i\inRange{n},\\
    \label{eq:P1-ODE-initial}
    &\alpha_i^n(0)=\alpha_i^{0,n}.
  \end{align}
  The coefficients in (\ref{eq:P1-ODE}) and (\ref{eq:P1-ODE-initial})
  are defined by the following expressions
  \begin{align*}
    &A_{ij}:=\int[A] \cond \nabla \xi_i\cdot \nabla \xi_j,&&i,j\inRange{n},\\
    &B_{ij}:= { \varepsilon}  \cflux_0\intr \xi_i\xi_j,&&i,j\inRange{n},\\
    &C_{ij}:=\sum_{k=1}^N\inta \soret \nabla ^\delta\bcoll_k\cdot \nabla \xi_j \xi_i&&i,j\inRange{n}.
  \end{align*}
  Since the system (\ref{eq:P1-ODE}) is linear,
  there exists for each fixed $n\in \naturalNumbers$ a unique solution $\alpha_i^n \in  C^1([0,T])$.

  To prove uniform estimates for $\temp_n$ with respect to $n$,
  we take in (\ref{eq:P1-weak-finite}) $\phi=\temp_n$. We obtain:
  \begin{align*}
    \half\partial _t\| \temp_n\| ^2
    + {\kappa_0} \| \nabla \temp_n\| ^2
    + { \varepsilon} \cflux_0\| \temp_n\| ^2_\ltr
    \le
    \sumN\inta \soret  |\nabla ^\delta \bcoll_i \cdot  \nabla \temp_n\temp_n|
    :=
    \tau_* \sumN A_i.
  \end{align*}
  Using the Cauchy-Schwarz inequality and
  Young's inequality in the form\\$ab\le\eta a^2+b^2/4\eta $,
  where $\eta >0$, we get:
  \begin{equation*}
    A_i\le \eta \| \nabla \temp_n\| ^2+\frac{1}{4\eta }\| \nabla ^\delta \bcoll_i\temp_n\| ^2
    \le \eta \| \nabla \temp_n\| ^2+\frac{1}{4\eta }\| \nabla ^\delta \bcoll_i\| ^2_{\lfa}\| \temp_n\| ^2_{\lfa}.
  \end{equation*}
  The mollifier property (\ref{eq:thermal.mollified-gradient-property}) yields
  $\| \nabla ^\delta \bcoll_i\| ^2_{\lfa}\le c^\delta\| \bcoll_i\| ^2_\infty$.
  Using Gagliardo-Nirenberg inequality (see \cite{nirenberg1959elliptic} e.g.), we get:
  \begin{equation}
    \label{eq:P1-Nirenberg}
    \| \temp_n\| ^2_{\lfa}\le c\| \temp_n\| ^{1/2}\| \nabla \temp_n\| ^{3/2}.
  \end{equation}
  Applying Young's inequality, we obtain:
  \begin{equation}
    \label{eq:P1-Nirenberg-Young}
    c\| \temp_n\| ^{1/2}\| \nabla \temp_n\| ^{3/2} \le \eta \| \nabla \temp_n\| ^2+c_\eta \| \temp_n\| ^2.
  \end{equation}
  Finally, we obtain the structure:
  \begin{align*}
    &\half\partial _t\| \temp_n\| ^2
      +({\kappa_0} -2N\eta )\| \nabla \temp_n\| ^2
      + { \varepsilon  \cflux_0\|  \temp_n\| ^2_\ltr }
      \le
      c_\eta ^\delta\sumN\| \bcoll_i\| ^2\| \temp_n\| ^2.
  \end{align*}
  For a small $\eta > 0$ Gronwall's lemma gives:
  \begin{align*}
    &\| \temp_n(t)\| ^2+ {\kappa_0}  \int_0^t\| \nabla \temp_n(t)\| ^2<C&&\text{for }t\in (0,T),
  \end{align*}
  where $C>0$ is independent of $n$ { and $\varepsilon$}, since $\bcoll_i$ are uniformly bounded. This ensures that
  \begin{equation}
    \label{eq:P1-energy-estimate-1}
    {\{ \temp_n \}  \text{ is bounded in } \lilta \cap  \ltha.}
  \end{equation}

  To show uniform estimates for $\partial _t\temp_n$ with respect to $n$,
  we can take $\phi=\partial _t\temp_n$ in (\ref{eq:P1-weak-finite}).
  {Indeed, by the formula (\ref{eq:P1-Galerkin-approximation}) of
    $\theta_n^{\varepsilon}$,
    $\partial_t \theta_n^{\varepsilon} = \sum_{j= 1}^n (\partial_t \alpha_j^n) \xi_j$
    so that $\partial_t \theta_n^{\varepsilon} \in \text{span}\{\xi_j\}_{j=1}^n$.
    Then by using
    the Cauchy-Schwarz and Young's inequalities,
    as well as the mollifier property
    (\ref{eq:thermal.mollified-gradient-property}) we get: }
  \begin{align}
    &\| \partial _t\temp_n\| ^2
      +\half\partial _t\| \sqrt{\cond} \nabla \temp_n\| ^2
      + \varepsilon \frac{\cflux_0}{2}\partial _t\| \temp_n\| ^2_\ltr
      \le
      \tau_* \sumN\inta |\nabla ^\delta \bcoll_i \cdot  \nabla \temp_n\partial _t\temp_n|
      \nonumber\\
    \label{eq:P1-rev-20}
    &\quad
      \le
      \left(
      c^\delta \tau_* \sumN\| \bcoll_i\| _\lia
      \right)
      (\eta\| \partial _t\temp_n\| ^2+ C_\eta \| \nabla\temp_n\| ^2)
      \text{ for } \eta > 0.
  \end{align}
  { By taking a small $\eta > 0$ and  using (\ref{eq:P1-energy-estimate-1}), it holds that:
  }
  \begin{align*}
    & {\kappa_0} \| \nabla \temp_n\| ^2 + \int_0^t\| \partial _t\temp_n\| ^2<C&&\forAll{Time},
  \end{align*}
  where $C>0$ depends on $\delta$, but is independent of $n$ and $\varepsilon$.
  Together with (\ref{eq:P1-energy-estimate-1}) this ensures that:
  \begin{equation}
    \label{eq:P1-energy-estimate-2}
    { \{\temp_n \}  \text{ is bounded  in } \hlta \cap  \liha. }
  \end{equation}
  Hence, we can choose a subsequence $\temp_{n_k}\weakTo \temp$ in $\hlta$
  and $\temp_{n_k} \weakStarTo \temp$ in $\liha$ as $k\to\infty$.

  Now, using
  \begin{equation}
    \label{eq:P1-test-function}
    v_m(t,x):=\sum_{j=1}^m\beta_j^m(t)\xi_j(x)
  \end{equation}
  as a test function in (\ref{eq:P1-weak-finite}) and integrating
  with respect to time we get:
  \begin{equation}
    \label{P1-weak-finite-integrated}
    \begin{aligned}
      &\int[TA] \partial _t\temp_{n_k}v_m
      +
      \int[TA] \cond \nabla \temp_{n_k}\cdot \nabla v_m
      + {\varepsilon}
      \cflux_0\intt\intr \temp_{n_k}v_m
      \\&\qquad=
      \sumN\int[TA] \soret  \nabla ^\delta \bcoll_i\cdot \nabla \temp_{n_k}v_m.
    \end{aligned}
  \end{equation}
  Using (\ref{eq:P1-energy-estimate-2}), we pass to the limit as $k\to\infty$
  to obtain: For each $m$
  \begin{equation}
    \label{eq:P1-weak-integrated}
    \int[TA] \partial _t\temp v_m
    +
    \int[TA] \cond \nabla \temp\cdot \nabla v_m
    + {\varepsilon}
    \cflux_0\intt\intr \temp v
    =
    \sumN\int[TA]   \soret \nabla ^\delta \bcoll_i\cdot \nabla \temp v_m.
  \end{equation}
  Note that (\ref{eq:P1-weak-integrated}) holds for all $v\in \ltha$ since
  we can approximate $v$ with $v_m$ in $\ltha$, hence
  \begin{align*}
    \int[TA] \partial _t\temp v
    +
    \int[TA] \cond \nabla \temp\cdot \nabla v
    + {\varepsilon}
    \cflux_0\intt\intr \temp v
    =
    \sumN\int[TA] \soret  \nabla ^\delta \bcoll_i\cdot \nabla \temp v,
  \end{align*}
  holds for all $v\in \ltha$.

  Finally, we show the initial condition holds. Indeed, the Aubin-Lions lemma guarantees
  that $\temp_{n_i}\to\temp$ in $C([0,T]; \lta)$. Then on account of $\temp_{n_k}(0)\to\temp^0$
  in $\lta$ as $k\to\infty$, we get $\temp(0)=\temp^0$.
\end{proof}
\begin{lemma}{{\bf Positivity and boundedness of solutions to \nameref{auxiliary:1}.}}
  \label{lemma:P1-positivity-boundedness}\\
  \INPUT{
    Let $\bcoll_i\in K(T,M)$, $M>0$, and assume \nameref{assumption:1}-\nameref{assumption:2}.
  }
  Then $0\le \temp\le \| \temp^0\| _\lia$ a.e. $\tdomain{temp}$.
\end{lemma}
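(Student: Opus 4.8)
The plan is to derive the two-sided bound by testing the weak formulation \eqref{eq:P1-weak} with appropriate truncations of $\theta^\varepsilon$ and then applying Gronwall's lemma. Both truncations lie in $\ha$ for a.e.\ $t$, so they are admissible test functions, and the regularity $\theta^\varepsilon\in\hlta\cap\liha$ granted by Lemma~\ref{lemma:P1-existence} justifies using the chain rule to differentiate the squared $\lta$-norms in time. The lower bound $\theta^\varepsilon\ge0$ comes from testing with the negative part $-(\theta^\varepsilon)^-$, while the upper bound $\theta^\varepsilon\le M_0:=\|\theta^0\|_\lia$ comes from testing with the excess $(\theta^\varepsilon-M_0)^+$; the two arguments are independent and structurally identical.

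For positivity, I would take $\phi=-(\theta^\varepsilon)^-$ in \eqref{eq:P1-weak}. On $\{\theta^\varepsilon<0\}$ one has $\phi=\theta^\varepsilon$ and $\nabla\phi=\nabla\theta^\varepsilon$, while $\phi\equiv0$ otherwise; hence the accumulation term produces $\tfrac12\partial_t\|(\theta^\varepsilon)^-\|^2$, the conduction term produces $\inta\kappa^\varepsilon|\nabla(\theta^\varepsilon)^-|^2\ge\kappa_0\|\nabla(\theta^\varepsilon)^-\|^2$, and the Robin term equals $\varepsilon g_0\intr((\theta^\varepsilon)^-)^2\ge0$ and may be discarded. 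For the upper bound I would take $\phi=(\theta^\varepsilon-M_0)^+$; the additive constant $M_0$ disappears from $\nabla\theta^\varepsilon$, so the accumulation and conduction terms are identical with $(\theta^\varepsilon)^-$ replaced by $(\theta^\varepsilon-M_0)^+$, and since $\theta^\varepsilon>M_0\ge0$ on the support of the test function the Robin term $\varepsilon g_0\intr\theta^\varepsilon(\theta^\varepsilon-M_0)^+$ is again nonnegative and can be dropped.

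In both cases the only term lacking a favourable sign is the cross-diffusion term on the right-hand side, which, writing $w$ for $(\theta^\varepsilon)^-$ (resp.\ $(\theta^\varepsilon-M_0)^+$), reduces to $\sumN\inta\tau^\varepsilon\,w\,\nabla^\delta\bcoll_i\cdot\nabla w$. The main obstacle is to absorb this into the coercive conduction term uniformly in $\varepsilon$. I would bound $|\nabla^\delta\bcoll_i\cdot\nabla w|\le\|\nabla^\delta\bcoll_i\|_\lia|\nabla w|$ and control the mollified gradient through \eqref{eq:thermal.mollified-gradient-property} together with $\bcoll_i\in K(T,M)$ (so $\|\bcoll_i\|_\lia\le M$), giving $\|\nabla^\delta\bcoll_i\|_\lia\le c^\delta M$; alternatively one can use the $\lfa$-bound on $\nabla^\delta\bcoll_i$ together with the Gagliardo--Nirenberg inequality \eqref{eq:P1-Nirenberg}, exactly as in the proof of Lemma~\ref{lemma:P1-existence}. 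Young's inequality $ab\le\eta a^2+b^2/4\eta$ then yields $\tau_*\,c^\delta M\inta w|\nabla w|\le\eta\|\nabla w\|^2+c_\eta\|w\|^2$.

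Choosing $\eta>0$ small enough that $N\tau_* c^\delta M\eta<\kappa_0$ absorbs the gradient contribution and leaves the differential inequality $\tfrac12\partial_t\|w\|^2\le C\|w\|^2$, with $C=C(\delta,M)$ independent of $\varepsilon$. By \nameref{assumption:2} the initial datum satisfies $0\le\theta^0\le\|\theta^0\|_\lia=M_0$ a.e., so $w(0)=0$ in both cases; Gronwall's lemma then forces $\|w(t)\|=0$ for all $t\in[0,T]$. Thus $(\theta^\varepsilon)^-=0$ and $(\theta^\varepsilon-M_0)^+=0$ a.e., which is exactly $0\le\theta^\varepsilon\le\|\theta^0\|_\lia$ a.e.\ in $(0,T)\times\Omega^\varepsilon$. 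The delicate point throughout is this absorption step; the remaining work is routine sign-chasing on the truncation sets $\{\theta^\varepsilon<0\}$ and $\{\theta^\varepsilon>M_0\}$.
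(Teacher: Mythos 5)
Your proposal is correct and follows essentially the same route as the paper: testing \eqref{eq:P1-weak} with $-(\temp)^-$ and $(\temp-M_0)^+$ for $M_0=\|\temp^0\|_\lia$, discarding the signed Robin and conduction contributions, absorbing the cross-diffusion term via the mollified-gradient bound \eqref{eq:thermal.mollified-gradient-property} and Young's inequality with $\eta<\kappa_0$, and concluding by Gronwall from the vanishing initial data. No gaps.
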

\begin{proof}
  Let $\temp:=\temp^+-\temp^-$,
  where $z^+:=\max(z,0)$ and $z^-:=\max(-z,0)$.
  Testing (\ref{eq:P1-weak}) with $\phi:=-\temp^-$, and using (\ref{eq:thermal.mollified-gradient-property}) gives:
  \begin{align*}
    &\half\partial _t\| \temp^-\| ^2+
      {\kappa_0} \| \nabla \temp^-\| ^2
      + {\varepsilon}  \cflux_0\| \temp^-\| ^2_\ltr
      \le
      c^\delta\soret\sumN\| \bcoll_i\| _\infty \| \nabla \temp^-\temp^-\| _\loa \\
    &\quad \le
      \left(
      C^\delta_\eta \soret\sumN\| \bcoll_i\| _\infty
      \right)
      \| \temp^-\| ^2+ {\eta}  \| \nabla \temp^-\| ^2 \text{ for } \eta > 0.
  \end{align*}
  Choosing { $\eta < \kappa_0$} and taking into account that $\temp^-(0)= 0$,
  Gronwall's lemma gives $\| \temp^-\| ^2\le 0$. This means $\temp \geq 0$ a.e. in $\Omega$ for
  all $t\in(0,T)$.

  Let $\phi = (\temp-M_0)^+$ in (\ref{eq:P1-weak}) with $M_0\ge \| \temp(0)\| _\lia$:
  For $\eta > 0$
  \begin{align*}
    &\half\partial _t \| (\temp-M_0)^+\| ^2+ \kappa_0 \| \nabla (\temp-M_0)^+\| ^2
      + {\varepsilon} \cflux_0\| (\temp-M_0)^+\| ^2_\ltr\\
    &\qquad+ g_0 \intr M_0(\temp-M_0)^+
      \le \tau_* \sumN\inta \nabla^\delta\bcoll_i\cdot \nabla(\temp-M_0)^+(\temp-M_0)^+\\
    &\qquad\le
      \left(
      \tau_* c^\delta\sumN\| \bcoll_i\| _\infty
      \right)
      \left(
      c_\eta \| (\temp-M_0)^+\| ^2+ {\eta}  \| \nabla(\temp-M_0)^+\| ^2
      \right).
  \end{align*}
  Discarding the positive terms on the left side and then applying Gronwall's lemma
  leads to:
  \begin{align*}
    \| (\temp-M_0)^+(t)\| ^2\le \| (\temp-M_0)^+(0)\| ^2
    \exp
    \left(
    \tau_* c^\delta c_\eta \sumN\| \bcoll_i\| _\infty t
    \right).
  \end{align*}
  Since $\| (\temp-M_0)^+(0)\| =0$, we obtain $(\temp-M_0)^+(t)=0$.
  Thus the proof of the lemma is completed.
\end{proof}
\begin{lemma}{\bf Existence of solutions to \nameref{auxiliary:2}.}
  \label{lemma:P2-existence}\\
  \INPUT{
    Let $\btemp\in K(T,M), M>0$
    and \nameref{assumption:1}-\nameref{assumption:2} hold.
  }
  Then \nameref{auxiliary:2} has solutions
  $\coll_i\in \hlta\cap L^\infty(0,T;H^1(\Omega))$
  and $\dep_i\in  \hltg$
  in the following sense:
  \begin{align}
    \intertext{For all $\psi_i\in \ha$, it holds:}
    \label{eq:P2-coll-weak}
    &\begin{aligned}
      \eqColloidsWeak{\coll_i}{\btemp}{R_i^M(\coll)}{\psi_i}
    \end{aligned}\\
    \label{eq:P2-coll-initial}
    &\coll_i(0,x)=\coll_i^0(x)\quad \text{a.e. in $\A$,}\\
    \intertext{and for all $\varphi_i\in \ltg$:}
    \label{eq:P2-dep-weak}
    & \intg \partial _t\dep_i \varphi_i= \intg (a_i\coll_i- b_i\dep_i)\varphi_i,\\
    \label{eq:P2-dep-initial}
    & \dep_i(0,x)=\dep_i^0(x)\quad\text{a.e. on $\G$}.
  \end{align}
\end{lemma}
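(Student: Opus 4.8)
The plan is to reproduce the Galerkin argument of Lemma~\ref{lemma:P1-existence}, now for the coupled colloid--deposition system, the two genuinely new features being the surface coupling between $u_i^\varepsilon$ and $v_i^\varepsilon$ on $\Gamma^\varepsilon$ and the reaction source $R_i^M(u^\varepsilon)$. Two observations make this feasible. First, the truncation $\sigma_M$ renders each $R_i^M$ bounded and globally Lipschitz on $\mathbb{R}^N$, so the reaction enters only as a controlled source. Second, $\btemp\in K(T,M)$ is a fixed datum and $\nabla^\delta\btemp$ is bounded by the mollifier estimate \eqref{eq:thermal.mollified-gradient-property}; hence, aside from a bounded source and a boundary term, the equation for $u_i^\varepsilon$ is of exactly the same parabolic type already handled in \nameref{auxiliary:1}.

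First I would eliminate the deposited mass $v_i^\varepsilon$. For a given boundary trace of $u_i^\varepsilon$, the pointwise linear ODE \eqref{eq:main-dep} with datum $v_i^0$ is solved explicitly by the Duhamel formula $v_i^\varepsilon(t,x)=v_i^0(x)e^{-b_i t}+a_i\int_0^t e^{-b_i(t-s)}u_i^\varepsilon(s,x)\,ds$ for $x\in\Gamma^\varepsilon$, which exhibits $v_i^\varepsilon$ as a causal, affine, Lipschitz functional of the trace of $u_i^\varepsilon$. Inserting this expression into the flux boundary condition \eqref{eq:bc-coll-grain} reduces $(P_2)$ to a problem for $u_i^\varepsilon$ alone, carrying a linear memory term on $\Gamma^\varepsilon$; $v_i^\varepsilon$ is then recovered a posteriori.

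Next I would run the Galerkin scheme as before: take a Schauder basis $\{\xi_j\}$ of $\ha$, set $u_{i,n}^\varepsilon=\sum_{j=1}^n\gamma_{ij}^n(t)\xi_j$, and test the weak form \eqref{eq:P2-coll-weak}, with $v_i^\varepsilon$ replaced by its Volterra expression, against each $\xi_k$. Since $R_i^M$ is bounded and Lipschitz and the memory term is linear and bounded, the resulting finite system is an integro-differential ODE with Lipschitz right-hand side and thus uniquely solvable on all of $[0,T]$. The a priori estimates mirror Lemma~\ref{lemma:P1-existence}: testing with $\phi=u_{i,n}^\varepsilon$ and summing over $i$ yields the $\lilta\cap\ltha$ bound, where the mollified Soret/Dufour term is absorbed via Gagliardo--Nirenberg and Young exactly as in \eqref{eq:P1-Nirenberg}--\eqref{eq:P1-Nirenberg-Young}, the reaction contributes only a bounded term, and the surface integral $\varepsilon\int_{\Gamma^\varepsilon}(a_iu_i^\varepsilon-b_iv_i^\varepsilon)u_i^\varepsilon$ is dominated via a trace inequality together with the Volterra bound on $v_i^\varepsilon$ and Gronwall. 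Testing with $\phi=\partial_t u_{i,n}^\varepsilon$ then gives, as in \eqref{eq:P1-rev-20}, the stronger $\hlta\cap\liha$ bound.

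Finally I would extract a subsequence converging weakly in $\hlta$, weak-$*$ in $\liha$, and strongly in $C([0,T];L^2(\Omega^\varepsilon))$ by Aubin--Lions; the strong $L^2$ convergence is precisely what passes the continuous, bounded reaction $R_i^M(u_n^\varepsilon)\to R_i^M(u^\varepsilon)$ to the limit, while trace compactness handles the boundary integral and fixes $v_i^\varepsilon$ through the Duhamel formula. The initial condition \eqref{eq:P2-coll-initial} follows from the $C([0,T];L^2(\Omega^\varepsilon))$ convergence as in $(P_1)$, and the regularity $v_i^\varepsilon\in\hltg$ is read directly off \eqref{eq:P2-dep-weak}, since $a_iu_i^\varepsilon-b_iv_i^\varepsilon\in L^2$ forces $\partial_t v_i^\varepsilon\in L^2$. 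The \emph{main obstacle} I anticipate is closing the uniform-in-$n$ energy estimates against the surface coupling: the term $\varepsilon\int_{\Gamma^\varepsilon}(a_iu_i^\varepsilon-b_iv_i^\varepsilon)u_i^\varepsilon$ must be controlled by the $\ha$-norm through a trace inequality with the explicit $\varepsilon$-weight accounted for, and the memory dependence of $v_i^\varepsilon$ on the history of $u_i^\varepsilon$ must be fed carefully into Gronwall's lemma; by comparison, the limit passage in the nonlinear reaction is routine once strong $L^2$ compactness is available.
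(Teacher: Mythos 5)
Your proposal is correct, but it takes a genuinely different route from the paper at the key structural step. The paper does \emph{not} eliminate $\dep_i$: it sets up a simultaneous Galerkin approximation, using a Schauder basis $\{\xi_j\}$ of $\ha$ for $\coll_{i,n}$ \emph{and} a separate orthonormal basis $\{\eta_j\}$ of $\ltg$ for $\dep_{i,n}$, obtains a coupled finite-dimensional system (\ref{eq:P2-ode})--(\ref{eq:P2-ode-ic-dep}) with linear left-hand side and globally Lipschitz right-hand side (thanks to $\sigma_M$), and then derives the energy and time-derivative estimates by testing with $(\coll_{i,n},\dep_{i,n})$ and $(\partial_t\coll_{i,n},\partial_t\dep_{i,n})$ and \emph{adding} the two resulting inequalities before applying Gronwall; the awkward boundary term $\varepsilon\intg b_i\,\partial_t\coll_{i,n}\dep_{i,n}$ is handled by an integration by parts in time, see (\ref{eq:P2-energy-interm-1}). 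You instead solve the surface ODE exactly by the Duhamel formula and reduce $(P_2)$ to a single equation for $\coll_i$ with a Volterra memory term on $\A[Grain]$. Your reduction is legitimate: the solution map of (\ref{eq:P2-dep-weak}) is an affine causal Lipschitz functional of the trace, the resulting integro-differential Galerkin system is still uniquely solvable by Picard iteration, and the memory term feeds into Gronwall in integrated form without difficulty, so the same a priori bounds close; weak convergence of traces (from the $\ltha$ bound) then suffices to pass to the limit in the linear boundary terms, and $\dep_i\in\hltg$ is read off the formula. What your route buys is fewer Galerkin unknowns and an explicit, sign-preserving representation of $\dep_i$; what the paper's route buys is a purely local-in-time ODE system and symmetric treatment of the two unknowns, which is also the form reused verbatim in Lemmas \ref{lemma:P2-positivity-boundedness} and \ref{lemma:P1-grad-boundedness-coll}. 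Two cosmetic remarks: Gagliardo--Nirenberg is not needed here (unlike in $(P_1)$, the mollified gradient falls on the fixed datum $\btemp\in K(T,M)$, so (\ref{eq:thermal.mollified-gradient-property}) plus Cauchy--Schwarz and Young already absorbs the cross-diffusion term, which is what the paper does); and to match the stated regularity $\coll_i\in L^\infty(0,T;H^1)$ you do need the $\partial_t\coll_{i,n}$ test, as you indicate, so make sure the memory term is integrated by parts in time there exactly as the paper does for $\dep_{i,n}$.
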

\begin{proof}
  Let \{$\xi_j$\} -- Schauder basis of $\ha$.
  Then, for each $n\in\naturalNumbers$, there exists
  \begin{equation}
    \label{eq:P2-galerkin-initial}
    \coll_{i,n}^0(x):=\sum_{j=1}^n\alpha_{i,j}^{0,n}\xi_j(x)
    \text{ such that }
    \coll_{i,n}^0\to\coll_i^0\text{ in }\ha
    \text{ as } n\to\infty.
  \end{equation}
  We denote by $\coll_{i,n}$ the Galerkin approximation of $\coll_i$, that is:
  \begin{align}
    \label{eq:galarkin-def}
    \coll_{i,n}(t,x):=\sum_{j=1}^n\alpha_{i,j}^n(t)\xi_j(x) && \forAll{TA}.
  \end{align}
  $\coll_{i,n}$ must satisfy (\ref{eq:P2-coll-weak}), and hence,
  \begin{equation}
    \label{eq:P2-coll-weak-finite}
    \begin{aligned}
      \eqColloidsWeak{\coll_{i,n}}{\btemp}{R_i^M(\coll_n)}{\psi_i},
      \qquad\text{for all }\psi_i\in \mspan\{\xi_j\}_{j=1}^n.
    \end{aligned}
  \end{equation}
  Accordingly, let $\{\eta_j\}$ -- an orthonormal basis of $\ltg$.
  Then for each $n\in\naturalNumbers$ there exists
  \begin{equation}
    \label{eq:P2-galerkin-initial-dep}
    \dep_{i,n}^0(x):=\sum_{j=1}^n\beta_{i,j}^{0,n}\eta_j(x)
    \text{ such that }
    \dep_{i,n}^0\to\dep_i^0\text{ in }\ltg
    \text{ as } n\to\infty.
  \end{equation}
  We denote by $\dep_{i,n}$ the Galerkin approximation of $\dep_i$, that is:
  \begin{align}
    \label{eq:galarkin-def-dep}
    \dep_{i,n}(t,x):=\sum_{j=1}^n\beta_{i,j}^n(t)\eta_j(x) ,&& \forAll{TG}.
  \end{align}
  $\dep_{i,n}$ must satisfy (\ref{eq:P2-dep-weak}), and hence,
  \begin{equation}
    \label{eq:P2-dep-weak-finite}
    \eqDepositWeak{\dep_{i,n}}{\coll_{i,n}}{\varphi_i},
    \qquad\text{for all }\varphi_i\in \mspan\{\eta_j\}_{j=1}^n.
  \end{equation}
  $\alpha_{i,j}^n(t)$ and $\beta_{i,j}^n(t)$ can be found by substituting $\coll_{i,n}$ and $\dep_{i,n}$
  into (\ref{eq:P2-coll-weak}) -- (\ref{eq:P2-dep-initial})
  and using $\xi_k$ and $\eta_k$ for $k\inRange{n}$ as test functions:
  \begin{align}
    \label{eq:P2-ode}
    &\begin{aligned}
      &\partial _t\alpha_{i,k}^n(t)+
      \sum_{j=1}^n(A_{ijk}+ C_{ijk}-D_{ijk})\alpha_{i,j}^n(t)
      -\sum_{j=1}^nE_{ijk}\beta_{i,j}^n(t)
      \\&\qquad
      =
      \inta \xi_k
      \sum_{a=1}^{i-1}\coag_{a,i-a}
      \sigma_M\left(\sum_{b=1}^n\alpha_{a,b}^n(t)\xi_b\right)
      \sigma_M\left(\sum_{c=1}^n\alpha_{i-a,c}^n(t)\xi_c\right)
      \\&\qquad
      -
      \inta \xi_k
      \sum_{a=1}^N\coag_{a,i}
      \sigma_M\left(\sum_{b=1}^n\alpha_{i,b}^n(t)\xi_b\right)
      \sigma_M\left(\sum_{c=1}^n\alpha_{a,c}^n(t)\xi_c\right),
    \end{aligned}\\
    \label{eq:P2-ode-ic}
    &\alpha_{i,j}^n(0)=\alpha_{i,j}^{0,n},\\
    \label{eq:P2-ode-dep}
    &\begin{aligned}
      &\partial _t\beta_{i,k}^n(t)=\sum_{j=1}^nG_{ijk}\alpha_{i,j}^n(t)-H_{ijk}\beta_{i,j}^n(t),
    \end{aligned}\\
    \label{eq:P2-ode-ic-dep}
    &\beta_{i,j}^n(0)=\beta_{i,j}^{0,n}.
  \end{align}
  The coefficients arising in (\ref{eq:P2-ode}) are defined by:
  \begin{align*}
    &A_{ijk}:=\inta\diff_i\nabla \xi_j\cdot \nabla \xi_k,
    &&\\
    &C_{ijk}:= {\varepsilon a_i}\intg \xi_j\xi_k,
    &&D_{ijk}:=\inta \dufour_i \nabla ^\delta \btemp\cdot \nabla \xi_j \xi_k,\\
    &E_{ijk}:= {\varepsilon   b_i}\intg\xi_k\eta_j,
    &&G_{ijk}:= a_i\intg \xi_j\eta_k,\\
    &H_{ijk}:= b_i\intg \eta_j\eta_k.
  \end{align*}
  The left-hand side of this system of ODEs is linear, while the right-hand side
  is globally Lipschitz.
  Thus there exists a unique solution $\alpha_{i,j}^n(t), \beta_{i,j}^n(t)\in H^1(0,T)$
  to (\ref{eq:P2-ode}) - (\ref{eq:P2-ode-ic-dep}) for $t\in(0,T)$.

  To show uniform  { estimates in $n$ } for $\coll_{i,n}$ and $\dep_{i,n}$,
  we take $\psi_i=\coll_{i,n}$ and $\varphi_i=\dep_{i,n}$
  in (\ref{eq:P2-coll-weak-finite}) and (\ref{eq:P2-dep-weak-finite}) respectively.
  We get the inequality:
  \begin{align*}
    &\half\partial _t\| \coll_{i,n}\| ^2
      + {d_0}    \| \nabla \coll_{i,n}\| ^2
      +  {\varepsilon}  a_0 \| \coll_{i,n}\| ^2_\ltg\\
    &\quad\le
      {\varepsilon} b_* \intg |\dep_{i,n}\coll_{i,n}|
      + \delta_*  c^\delta\| \btemp\| _\infty\| \nabla \coll_{i,n}\| \| \coll_{i,n}\|
      +\int[A]R_i^M(\coll_n)\coll_{i,n}\\
    &\quad\le
      {\eta}  \| \coll_{i,n}\| ^2_\ltg
      + C^{{\eta}}  \| \dep_{i,n}\| ^2_\ltg
      + {\eta}  \| \nabla \coll_{i,n}\| ^2\\
    &\qquad
      +C^{\delta {\eta}  }\| \btemp\| _\infty\| u_{i,n}\| ^2+C^M\|u_{i,n}\|, \\
    &\half\partial _t\| \dep_{i,n}\| ^2_\ltg
      + \dcb_i\| \dep_{i,n}\| ^2_\ltg\\
    \le& {\eta}  \| \coll_{i,n}\| ^2_\ltg
         +C^{{\eta}}  \| \dep_{i,n}\| ^2_\ltg \\
    \le & { C \eta ( \| \nabla u_{i,n}^\varepsilon\|^2 +
       \| u_{i,n}^\varepsilon\|^2) +  C^{\eta} \| \dep_{i,n}\| ^2_\ltg \text{ for } \eta > 0.  }
  \end{align*}
  After {taking a small $\eta$ and }
  adding the two inequalities, Gronwall's lemma gives:
  \begin{align}
    \label{eq:P2-Gronwalls-1}
    \| \coll_{i,n}\| ^2+ d_0 \int_0^t\| \nabla \coll_{i,n}\| ^2
    + \| \dep_{i,n}\| ^2_\ltg<C&&\forAll{Time},
  \end{align}
  where $C>0$ depends on $\delta$, $M$ and $T$, but is independent of $n$ and $\varepsilon $, which ensures:
  \begin{align}
    \label{eq:P2-energy-estimate-1}
    & { \{ \coll_{i,n} \} \text{ is bounded in  } }  \lilta \cap  \ltha,\\
    \label{eq:P2-energy-estimate-2}
    &{ \{ \dep_{i,n} \} \text{ is bounded in }  } \liltg.
  \end{align}
  To show uniform estimates for $\partial _t\coll_{i,n}$ and $\partial _t\dep_{i,n}$ with respect to $n$,
  we take $\psi_i=\partial _t\coll_{i,n}$ and $\varphi_i=\partial _t\dep_{i,n}$
  in (\ref{eq:P2-coll-weak-finite}) and (\ref{eq:P2-dep-weak-finite}) respectively,
  noticing that they are in { span$\{\xi_j\}_{j = 1}^n$.}
  We obtain:
  \begin{align}
    \label{eq:P2-energy-interm-1}
    &\begin{aligned}
      &\| \partial _t\coll_{i,n}\| ^2
      +\int[A]\frac{\diff_i}{2}\partial _t(\nabla \coll_{i,n})^2
      +\frac{\varepsilon a_i}{2}\partial _t\| \coll_{i,n}\| ^2_\ltg\\
      =  &      {\varepsilon \intg b_i  } \partial _t\coll_{i,n}\dep_{i,n}
      + \int[A] \dufour_i  \nabla ^\delta\btemp\cdot \nabla \coll_{i,n}\partial _t\coll_{i,n}
      +\int[A]R_i^M(\coll_n)\partial _t\coll_{i,n} \\
      =  &      {
        \varepsilon \partial_t \intg b_i   \coll_{i,n}\dep_{i,n}
        -  \varepsilon  \intg b_i   \coll_{i,n} \partial_t \dep_{i,n}
        +\int[A]  \dufour_i \nabla ^\delta\btemp\cdot \nabla \coll_{i,n}\partial _t\coll_{i,n}
        +\int[A]R_i^M(\coll_n)\partial _t\coll_{i,n},
      }
    \end{aligned}
    \\
    \label{eq:P2-energy-interm-2}
    &\| \partial _t\dep_{i,n}\| ^2_\ltg
      + { \frac{b_i}{2}\partial _t\|   \dep_{i,n}\| ^2_\ltg }
      = a_i\intg\coll_{i,n}\partial _t\dep_{i,n}.
  \end{align}
  { Adding them,
    and finally integrating the result over $(0,t)$,}  we get:
  {
    \begin{align*}
      &\int_0^t \| \partial _t\coll_{i,n}\| ^2
        + \int_0^t  \| \partial _t\dep_{i,n}\| ^2 \\
      &\quad       +\frac{d_0}{2} \| \nabla \coll_{i,n}(t)\|^2  + \frac{ \varepsilon a_0}{2} \| \coll_{i,n}(t)\| ^2_\ltg
        +\frac{b_i}{2}\| \dep_{i,n}(t)\| ^2_\ltg  \\
      & \quad \le b_* \| \coll_{i,n}(t)\| _\ltg\| \dep_{i,n}(t)\| _\ltg
        + b_* \| \coll_{i,n}(0)\| _\ltg\| \dep_{i,n}(0)\| _\ltg\\
      &\quad +  \eta  \int_0^t  \| \partial _t\dep_{i,n}\| ^2
        + \varepsilon^2 c^{\eta} b_*^2 \int_0^t \| u_{i,n}^{\varepsilon}\|^2
        +    \frac{d_*}{2}\| \nabla \coll_{i,n}(0)\| ^2 \\
      &\quad             +\frac{\varepsilon a_*}{2}\| \coll_{i,n}(0)\| ^2_\ltg
        +\frac{b_*}{2}\| \dep_{i,n}(0)\| ^2_\ltg \\
      & \quad     + \eta \int_0^t \| \partial _t\coll_{i,n}\| ^2
        +  \delta_*^2 c^\delta c^{\eta}  \| \btemp\| _\infty\int_0^t \| \nabla \coll_{i,n}\| ^2 \\
      & \quad    +C^{M} C^{\eta} + \eta  a_* \int_0^t  \| \partial _t\coll_{i,n}\| ^2
        \text{ for } t \in (0,T] \text{  and } \eta > 0.
    \end{align*} }
  {Denoting the initial condition terms on the right as $C_0$ and using  (\ref{eq:P2-energy-estimate-1}) and (\ref{eq:P2-energy-estimate-2}), } we get: {
    \begin{align}
      &\int_0^t (1-2\eta )\| \partial _t\coll_{i,n}\| ^2
        + \int_0^t (1 - \eta)  \| \partial _t\dep_{i,n}\| ^2
        +\frac{d_0}{2}\| \nabla \coll_{i,n}(t)\| ^2\nonumber\\
      \label{eq:P2-revision-19}
      &\quad\le
        C_0+ a_* \delta_* c^\delta c^\varepsilon \| \btemp\| _\infty\intt\| \nabla \coll_{i,n}\| ^2+C^{M}C^\varepsilon \quad \text{ for } t \in (0,T].
    \end{align} }
  Then  { by using  (\ref{eq:P2-energy-estimate-1}),  again, we have:}
  \begin{equation*}
    \| \nabla \coll_{i,n}(t)\| ^2+\intt\| \partial _t\coll_{i,n}\| ^2+\intt\| \partial _t\dep_{i,n}\| ^2\le C \quad
    \mbox{ for } t \in (0,T],
  \end{equation*}
  where $C>0$ depends on $\delta$, $M$ and $T$, but is independent of $n$ and {$\varepsilon$.
    Namely,  this gives:}
  \begin{align}
    \label{eq:P2-derivative-energy-estimate-1}
    & { \{ \coll_{i,n} \} \text{ is bounded in }  \hlta \cap  \liha, } \\
    \label{eq:P2-derivative-energy-estimate-2}
    & { \{ \dep_{i,n} \} \text{ is bounded in }  \hltg. }
  \end{align}
  Hence, we can choose subsequences $\coll_{i,n_j} \weakTo \coll_i$ in $\hlta$
  and $\coll_{i,n_j}\to\coll_i$ in $C([0,T],\lta)$ and weakly$^*$ in  $\liha$   and
  $\dep_{i,n_j}\weakTo\dep_i$ in $\hltg$
  as $j\to\infty$. Since $R_i^M$ is Lipschitz continuous,
  the rest of the proof follows the same line of arguments as in Lemma~\ref{lemma:P1-existence}.
\end{proof}
\begin{lemma}{{\bf Positivity and boundedness of solutions to }\nameref{auxiliary:2}.}
  \label{lemma:P2-positivity-boundedness}\\
  Let $\btemp\in K(T,M)$, $M>0$ and assume \nameref{assumption:1}-\nameref{assumption:2}.
  Then $0\le \coll_i\le M_i(T+1)$ a.e. $\tdomain{coll}$,
  $0\le \dep_i\le \bar{M}_i(T+1)$ a.e. $\tdomain{dep}$,
  where $M_i>0$ and $\bar{M}_i>0$ are independent of $M$.
\end{lemma}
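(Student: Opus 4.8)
The plan is to run the same energy scheme as for \nameref{auxiliary:1} in Lemmas~\ref{lemma:P1-existence}--\ref{lemma:P1-positivity-boundedness}, testing the weak formulation with suitable negative and positive parts, but now carrying the bulk unknown $u_i^{\varepsilon}$ and the surface unknown $v_i^{\varepsilon}$ \emph{together}, so that the adsorption--desorption coupling on $\Gamma^\varepsilon$ can be controlled. Two structural facts drive everything. First, on the set $\{u_i^{\varepsilon}<0\}$ the loss part of $R_i^M$ carries the factor $\sigma_M(u_i^{\varepsilon})=0$ and therefore vanishes, while the gain part $\tfrac12\sum_{k+j=i}\beta_{kj}\sigma_M(u_k^{\varepsilon})\sigma_M(u_j^{\varepsilon})$ is nonnegative; hence $R_i^M(u^{\varepsilon})\ge 0$ there. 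Second, if the surface equation is tested with weight $\varepsilon b_i/a_i$ relative to the bulk test function, the coupling integrals on $\Gamma^\varepsilon$ assemble into a perfect square $\varepsilon\int_{\Gamma^\varepsilon}\bigl(\sqrt{a_i}\,\zeta-\tfrac{b_i}{\sqrt{a_i}}\,\eta\bigr)^2\ge 0$ plus manifestly nonnegative remainders, and can simply be discarded.

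For positivity I would take $\psi_i=-(u_i^{\varepsilon})^-$ in the colloid equation and $\varphi_i=-\tfrac{\varepsilon b_i}{a_i}(v_i^{\varepsilon})^-$ in the deposition equation, and add. The diffusion term yields $\int d_i|\nabla (u_i^{\varepsilon})^-|^2\ge d_0\|\nabla(u_i^{\varepsilon})^-\|^2$; the mollified drift is bounded via \eqref{eq:thermal.mollified-gradient-property} and Young's inequality by $\eta\|\nabla(u_i^{\varepsilon})^-\|^2+C_\eta\|(u_i^{\varepsilon})^-\|^2$ and absorbed; the reaction term is $\le 0$ by the first fact and discarded; and the four boundary terms combine, by the second fact, into the nonnegative square with $\zeta=(u_i^{\varepsilon})^-$, $\eta=(v_i^{\varepsilon})^-$, together with the nonnegative pieces $\varepsilon b_i\int_{\Gamma^\varepsilon}(v_i^{\varepsilon})^+(u_i^{\varepsilon})^-$ and $\varepsilon b_i\int_{\Gamma^\varepsilon}(u_i^{\varepsilon})^+(v_i^{\varepsilon})^-$. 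What survives is $\tfrac12\partial_t\bigl(\|(u_i^{\varepsilon})^-\|^2+\tfrac{\varepsilon b_i}{a_i}\|(v_i^{\varepsilon})^-\|_{L^2(\Gamma^\varepsilon)}^2\bigr)\le C_\eta\|(u_i^{\varepsilon})^-\|^2$; since $u_i^0,v_i^0\ge 0$ the initial value of the bracketed quantity vanishes, and Gronwall's lemma forces $(u_i^{\varepsilon})^-\equiv0$ and $(v_i^{\varepsilon})^-\equiv0$.

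The upper bounds I would prove by induction on $i$, which is precisely what renders them independent of the truncation level $M$: the gain part of $R_i^M$ couples only species $k,j$ with $k+j=i$, hence only $u_1^{\varepsilon},\dots,u_{i-1}^{\varepsilon}$, so once $u_k^{\varepsilon}\le M_k(T+1)=:C_k$ is known for $k<i$ one obtains the $M$-free bound $R_i^M(u^{\varepsilon})\le \tfrac12\sum_{k+j=i}\beta_{kj}C_kC_j=:P_i$ (the loss part being nonpositive; for $i=1$ the gain is empty and $P_1=0$). I then introduce the affine barriers $K_i(t)=K_i^0+\lambda_i t$ and $L_i(t)=\tfrac{a_i}{b_i}K_i(t)$, with $K_i^0\ge\|u_i^0\|_{L^\infty}$ and $K_i^0\ge\tfrac{b_i}{a_i}\|v_i^0\|_{L^\infty}$, and test with $(u_i^{\varepsilon}-K_i(t))^+$ and $\tfrac{\varepsilon b_i}{a_i}(v_i^{\varepsilon}-L_i(t))^+$. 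The moving threshold contributes $\lambda_i\int_{\Omega^\varepsilon}(u_i^{\varepsilon}-K_i)^+$, which dominates the reaction source as soon as $\lambda_i\ge P_i$; the drift is absorbed as before; and, because $a_iK_i-b_iL_i=0$, writing $a_iu_i^{\varepsilon}-b_iv_i^{\varepsilon}=a_i(u_i^{\varepsilon}-K_i)-b_i(v_i^{\varepsilon}-L_i)$ reduces the boundary terms, after the same weighting, to the identical perfect square plus nonnegative remainders. Gronwall's lemma then gives $(u_i^{\varepsilon}-K_i(t))^+\equiv0$ and $(v_i^{\varepsilon}-L_i(t))^+\equiv0$, i.e. $u_i^{\varepsilon}\le K_i^0+\lambda_iT$ and $v_i^{\varepsilon}\le\tfrac{a_i}{b_i}(K_i^0+\lambda_iT)$; setting $M_i:=\max(K_i^0,\lambda_i)$ and $\bar M_i:=\tfrac{a_i}{b_i}M_i$ yields the asserted bounds, with $M_i,\bar M_i$ depending on $T,\varepsilon,\delta$ and the data but not on $M$.

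The genuine difficulty is the bulk--surface coupling on $\Gamma^\varepsilon$: the desorption term $-\varepsilon b_iv_i^{\varepsilon}$ is a source for $u_i^{\varepsilon}$ and the adsorption term $a_iu_i^{\varepsilon}$ a source for $v_i^{\varepsilon}$, so neither estimate closes in isolation and a naive Young splitting of the cross term cannot be absorbed (it would demand $a_i>b_i$ and $b_i>a_i$ simultaneously). The resolution is the exact relative weight $b_i/a_i$ and, for the upper bound, the equilibrium ratio $L_i=\tfrac{a_i}{b_i}K_i$, which together convert the coupling into a genuine square; this is the step I expect to require the most care. The rest is bookkeeping: checking that the drift absorption keeps $\eta<d_0$, that the barriers' initial traces vanish, and that the induction constants $P_i$ never reference $M$.
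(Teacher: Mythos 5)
Your proposal is correct and follows the same skeleton as the paper's proof: test with $-(u_i^{\varepsilon})^-$, $-(v_i^{\varepsilon})^-$ for positivity; for the upper bounds proceed by induction on $i$, pairing a bulk barrier with a surface barrier in the equilibrium ratio $a_iM_i=b_i\bar M_i$ and letting the barrier grow affinely in $t$ so that its slope dominates the $M$-free bound $\tfrac12\sum_{k+j=i}\beta_{kj}C_kC_j$ on the Smoluchowski gain term inherited from species $k<i$ (the paper uses slope $=$ offset $=M_i$, i.e. the barrier $M_i(t+1)$, which is your construction with $K_i^0=\lambda_i$); then Gronwall with vanishing initial data. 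The one genuine difference is the treatment of the bulk--surface coupling on $\Gamma^{\varepsilon}$: you weight the surface test function by $\varepsilon b_i/a_i$ so that the four boundary integrals assemble into the perfect square $\varepsilon\int_{\Gamma^{\varepsilon}}(\sqrt{a_i}\,(u_i^{\varepsilon})^--\tfrac{b_i}{\sqrt{a_i}}(v_i^{\varepsilon})^-)^2$ plus nonnegative remainders, which can be discarded outright; the paper instead keeps the unweighted test functions, bounds the cross term $\int_{\Gamma^{\varepsilon}}(v_i^{\varepsilon})^-(u_i^{\varepsilon})^-$ by Cauchy--Schwarz and Young, and routes the resulting $\|(u_i^{\varepsilon})^-\|^2_{L^2(\Gamma^{\varepsilon})}$ back into the bulk via the trace inequality (\ref{eq:hornung-trace}), absorbing the gradient part into $d_0\|\nabla(u_i^{\varepsilon})^-\|^2$ (see (\ref{eq:lemma4-interm-1})--(\ref{eq:lemma4-interm-2})). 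So your parenthetical claim that a direct Young splitting ``cannot be absorbed'' is overstated --- it closes once the boundary $L^2$ norm of the bulk unknown is traded for interior norms --- but your perfect-square device is a clean, self-contained alternative that avoids the trace constant altogether and makes the sign structure of the adsorption--desorption coupling explicit. Both routes yield the same $M$-independent constants, since in either case the Gronwall inequality starts from zero initial data and the barriers are fixed before $M$ enters.
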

\begin{proof}
  {
    Testing (\ref{eq:P2-coll-weak}) with $\psi_i=-\coll_i^-$  and the definition of $R_i^M$ give:
    \begin{align*}
      &\half \partial _t\| \coll_i^-\| ^2
        + d_0 \| \nabla \coll_i^-\| ^2
        +\cflux_i\| \coll_i^-\| ^2_\ltr
        + \varepsilon a_0 \| \coll_i^-\| ^2_\ltg
        + \varepsilon \intg b_i \dep_i\coll_i^-
      \\
      &\qquad\le \delta_* c^\delta\| \btemp\| _\infty\int_{\Omega} |\nabla \coll_i^-\coll_i^-|
        - \inta \sum_{j=1}^{i-1}\coag_{j,i-j} \coll_j^+\coll_{i-j}^+\coll_i^-\\
      &\qquad\quad
        +\inta \sum_{j=1}^N\coag_{ij}\coll_i^+\coll_j^+\coll_i^-.
    \end{align*}
    The second term on the right is always negative, while the third is always zero.
    We can discard them and apply Cauchy-Schwarz and Young's inequalities
    to the first term on the right, as well as discard the positive terms on the left to obtain:
    \begin{align}
      \label{eq:lemma4-interm-1}
      \half\partial _t\| \coll_i^-\| ^2
      +(\diff_0 - \eta) \| \nabla \coll_i^-\| ^2
      \le
      \delta_* c^\delta c^{\eta}  \| \btemp\| _\infty \| \coll_i^-\| ^2+ b_* \intg \dep_i^-\coll_i^-
      \text{ for } \eta > 0.
    \end{align}
    Testing (\ref{eq:P2-dep-weak}) with $\varphi_i=-\dep_i^-$ gives:
    \begin{align}
      \label{eq:lemma4-interm-2}
      \half\partial _t\| \dep_i^-\| ^2_\ltg \le b_* \| \dep_i^-\| ^2_\ltg
      + a_* \intg \dep_i^-\coll_i^-.
    \end{align}
    We rely on Cauchy-Schwarz, Young's and trace inequalities to estimate the last term.
    We obtain:
    \begin{align*}
      \intg \dep_i^-\coll_i^-
      &\le \| \dep_i^-\| _\ltg\| \coll_i^-\| _\ltg\le c^\eta\| \dep_i^-\| _\ltg^2+\eta\| \coll_i^-\| _\ltg^2\\
      &\le c^\eta\| \dep_i^-\| _\ltg^2 + \eta C(\| \coll_i^-\| ^2+\| \nabla \coll_i^-\| ^2)
        \text{ for } \eta > 0.
    \end{align*}
    Adding (\ref{eq:lemma4-interm-1}) and (\ref{eq:lemma4-interm-2}) and choosing
    $\eta + \eta C <d_0$ and taking into account that $\coll_i^-(0)\equiv 0$ and
    $\dep_i^-(0) \equiv 0$, Gronwall's lemma
    gives $\| \coll_i^-\| ^2+\| \dep_i^-\| ^2\le 0$, that is $\coll_i\ge 0$ a.e. in $\A$
    and $\dep_i\ge 0$ a.e. in $\G$ for all $t \in (0,T]$.
  }

  {
    Next, let $i=1$ and $\psi_1 := (\coll_1-M_1)^+$ in (\ref{eq:P2-coll-weak})
    and $\varphi _1 := (\dep_1-\bar{M}_1)^+$ in (\ref{eq:P2-dep-weak}).
    Apply (\ref{eq:thermal.mollified-gradient-property}) for the cross-diffusion term to get:
    \begin{eqnarray*}
      & &\half\partial _t\| (u_1^{\varepsilon}-M_1)^+\| ^2
      + d_0 \| \nabla (u_1^{\varepsilon }-M_1)^+\| ^2
      + \varepsilon a_0 \| (u_1^{\varepsilon}-M_1)^+\| ^2_\ltg  \\
      & &   + \varepsilon \intg (a_1M_1- b_1\bar{M}_1) (u_1^{\varepsilon}-M_1)^+
      +\varepsilon  \intg b_1 (\dep_1-\bar{M}_1)^-(\coll_1-M_1)^+ \\
      &   \le &  \varepsilon \intg b_1 (\dep_1-\bar{M}_1)^+(u_1^{\varepsilon}-M_1)^+
      + \delta_* c^\delta\| \btemp\| _\infty
      \| \nabla (u_1^{\varepsilon}-M_1)^+ (u_1^{\varepsilon}-M_1)^+\| _\loa \\
      && + \int_{\Omega^{\varepsilon}} R_1^M(u^{\varepsilon}) (u_1^{\varepsilon}-M_1)^+,  \\
      &  &\half\partial _t\| (\dep_1-\bar{M}_1)^+\| ^2_\ltg
      + b_1 \| (\dep_1-\bar{M}_1)^+\| ^2_\ltg
      +\intg a_1(\coll_1-M_1)^-(\dep_1-\bar{M}_1)^+\\
      &  \le  &  \intg a_1 (\dep_1-\bar{M}_1)^+(\coll_1-M_1)^+
      +\intg (a_1M_1- b_1\bar{M}_1) (\dep_1-\bar{M}_1)^+.
    \end{eqnarray*}
    Here, by the definition we note that $R_1^M(u^{\varepsilon}) \le 0$.
    Also,  we choose $M_1$ and $\bar{M}_1$ such that $a_1M_1- b_1\bar{M}_1=0$ and
    add the two inequalities, while dropping the positive terms on the left and using Cauchy-Schwarz and Young's inequalities on the right to obtain:
    \begin{eqnarray*}
      & &  \half \partial _t\| (u_1-M_1)^+\| ^2
      + (d_0 -\eta )\| \nabla (u_1-M_1)^+\| ^2
      + \varepsilon  a_0 \| (u_1-M_1)^+\| ^2_\ltg\\
      & & + \half\partial _t\| (\dep_1-\bar{M}_1)^+\| ^2_\ltg \\
      & \le &  (a_* + \varepsilon  b_*)(\eta \| (\coll_1-M_1)^+\| ^2_\ltg  +c^\eta \| (\dep_1-\bar{M}_1)^
      +\| _\ltg) \\
      & & + c^\eta (\delta_* c^\delta)^2  \| \btemp\| _\infty^2  \| (u_1-M_1)^+\| ^2
      \text{ for } \eta > 0.
    \end{eqnarray*}
    Then by taking a small $\eta  > 0$  Gronwall's lemma gives:
    \begin{align*}
      &\| (\coll_1-M_1)^+(t)\| ^2+\| (\dep_1-\bar{M}_1)^+\| ^2_\ltg\\
      &\quad\le (\| (\coll_1-M_1)^+(0)\| ^2+\| (\dep_1-\bar{M}_1)^+(0)\| ^2_\ltg)
        \exp
        \left(
        C(\dufour_i,\btemp,\delta,M)t
        \right).
    \end{align*}
    Since we choose $M_1 > 0$ to satisfy $\| (\coll_1-M_1)^+(0)\| =0$,
    and $\bar{M}_1 > 0$ to satisfy $\| (\dep_1-\bar{M}_1)^+(0)\| _\ltg=0$, we get
    $0 \le \coll_1 \le M_1$ and $0\leq \dep_1 \le \bar{M}_1$.
  }

  {
    \newcommand\testcoll{(\coll_2-M_2(t+1))^+}
    \newcommand\testdep{(\dep_2-\bar{M}_2(t+1))^+}
    Let $i=2$ and $\psi_2 := \testcoll$ in (\ref{eq:P2-coll-weak})
    and $\varphi _2 := \testdep$ in (\ref{eq:P2-dep-weak}) with $a_2 M_2 = b_2 \bar{M}_2$:
    \begin{align*}
      \half\partial _t&(\| \testcoll\| ^2+\| \testdep\| ^2_\ltg)\\
              &+\frac{d_0}{2}\| \nabla \testcoll\| ^2\\
              &+ \varepsilon a_2 \| \testcoll\| ^2_\ltg+ b_2 \|  \testdep\| ^2_\ltg\\
              &\le C\| \testcoll\| ^2+\int[A] R_2^M(\coll)\testcoll\\
              &\quad-M_2\int[A]\testcoll
                -\bar{M}_2 \int_{\Gamma^{\varepsilon}} \testdep.
    \end{align*}
    Here, we note that
    \begin{equation*}
      R_2^M(\coll)\le \half\coag_{11}\sigma_M(\coll_1)^2\le \half\coag_{11}\coll_1^2\le \half\coag_{11}M_1^2.
    \end{equation*}
    Similarly, we have:
    \begin{align*}
      \half\partial _t&(\| \testcoll\| ^2+\| \testdep\| ^2_\ltg)\\
              &\le C\| \testcoll\| ^2+(\half\coag_{11}M_1^2-M_2)\int[A] \testcoll\\
              &\le C\| \testcoll\| ^2.
    \end{align*}
    By applying Gronwall's lemma with $\half\beta_{11}M_1^2\le M_2$,
    we see that $\coll_2\le M_2(T+1)$ $\indomain{TA}$
    and $\dep_2\le \bar{M}_2(T+1)$ $\indomain{TG}$. Recursively, we can obtain the same estimates
    for $\coll_i$ and $\dep_i$ for $i\ge 3$.
  }
\end{proof}

\begin{lemma}{{\bf The boundedness of the concentration gradient for} \nameref{auxiliary:2}.}
  \label{lemma:P1-grad-boundedness-coll}
  \\
  \INPUT{Let $\btemp\in K(T,M_0)$ and assume \nameref{assumption:1}-\nameref{assumption:2} to hold}.
  Then there exists a positive constant $C(M_0)$ such that $\| \nabla \coll_i(t)\| \le C(M_0)$ and $\int_0^T||\partial_t \coll_i(t)||^2dt\leq C(M_0)$ for $t\in \Time$.
\end{lemma}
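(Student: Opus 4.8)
The plan is to obtain the two bounds from a single energy estimate produced by testing the weak colloid equation (\ref{eq:P2-coll-weak}) with the time derivative $\partial_t u_i^\varepsilon$. Since Lemma~\ref{lemma:P2-existence} already provides $u_i^\varepsilon\in\hlta\cap\liha$, the derivative $\partial_t u_i^\varepsilon$ is available in $L^2(\Omega^\varepsilon)$; as it need not belong to $H^1(\Omega^\varepsilon)$, the computation is carried out on the Galerkin approximations $u_{i,n}$ (where $\partial_t u_{i,n}\in\mathrm{span}\{\xi_j\}_{j=1}^n$ is an admissible test function, exactly as in (\ref{eq:P2-energy-interm-1})), and the resulting bound is passed to the limit by weak lower semicontinuity of the norms. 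The only genuinely new point compared with Lemma~\ref{lemma:P2-existence} is to track the dependence of the constants on the bound $M_0$ of $\btemp$, rather than on the truncation level $M$.

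Testing with $\partial_t u_i^\varepsilon$ and integrating over $(0,t)$ gives an identity of the schematic form
\begin{align*}
  &\int_0^t\|\partial_t u_i^\varepsilon\|^2 + \frac{d_0}{2}\|\nabla u_i^\varepsilon(t)\|^2
  + \frac{\varepsilon a_i}{2}\|u_i^\varepsilon(t)\|^2_\ltg
  \le C_0 + \varepsilon b_i\!\int_{\Gamma^\varepsilon}\! u_i^\varepsilon(t)\,v_i^\varepsilon(t) \\
  &\quad - \varepsilon\!\int_0^t\!\!\int_{\Gamma^\varepsilon}\! b_i\,u_i^\varepsilon\,\partial_t v_i^\varepsilon
  + \int_0^t\!\!\int_{\Omega^\varepsilon}\! \delta_i\,\nabla^\delta\btemp\cdot\nabla u_i^\varepsilon\,\partial_t u_i^\varepsilon
  + \int_0^t\!\!\int_{\Omega^\varepsilon}\! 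R_i^M(u^\varepsilon)\,\partial_t u_i^\varepsilon,
\end{align*}
where the surface flux has been integrated by parts in time and $C_0$ absorbs the initial-data contributions bounded via \nameref{assumption:2}. Each right-hand term is now controlled. The cross-diffusion term is the crucial one: by the mollifier property (\ref{eq:thermal.mollified-gradient-property}), $\|\nabla^\delta\btemp\cdot\nabla u_i^\varepsilon\|\le c^\delta\|\btemp\|_\infty\|\nabla u_i^\varepsilon\|\le c^\delta M_0\|\nabla u_i^\varepsilon\|$, so Young's inequality bounds it by $\eta\int_0^t\|\partial_t u_i^\varepsilon\|^2 + C_\eta(\delta_* c^\delta M_0)^2\int_0^t\|\nabla u_i^\varepsilon\|^2$, which is where the $M_0$-dependence enters. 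For the reaction term, Lemma~\ref{lemma:P2-positivity-boundedness} gives $0\le u_i^\varepsilon\le M_i(T+1)$ with $M_i$ independent of $M$, so the truncation is inactive ($R_i^M(u^\varepsilon)=R_i(u^\varepsilon)$) and $\|R_i(u^\varepsilon)\|_\infty$ is bounded independently of both $M$ and $M_0$; hence this term is $\le\eta\int_0^t\|\partial_t u_i^\varepsilon\|^2 + C_\eta$. The two surface terms are handled by substituting the deposit equation $\partial_t v_i^\varepsilon=a_iu_i^\varepsilon-b_iv_i^\varepsilon$ and invoking the $L^\infty$ bounds on $u_i^\varepsilon,v_i^\varepsilon$ from Lemma~\ref{lemma:P2-positivity-boundedness}, which make them bounded by a constant independent of $M$.

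Choosing $\eta$ small enough to absorb the $\int_0^t\|\partial_t u_i^\varepsilon\|^2$ contributions into the left-hand side, we reach
\begin{align*}
  \|\nabla u_i^\varepsilon(t)\|^2 + \int_0^t\|\partial_t u_i^\varepsilon\|^2
  \le C(M_0) + C(M_0)\int_0^t\|\nabla u_i^\varepsilon\|^2 \quad\text{for } t\in(0,T),
\end{align*}
and Gronwall's lemma yields both $\|\nabla u_i^\varepsilon(t)\|\le C(M_0)$ for a.e. $t\in(0,T)$ and $\int_0^T\|\partial_t u_i^\varepsilon\|^2\le C(M_0)$. I expect the main obstacle to be the surface coupling $\varepsilon\int_{\Gamma^\varepsilon}b_iv_i^\varepsilon\partial_t u_i^\varepsilon$: because $\partial_t u_i^\varepsilon$ lies only in $L^2$, it cannot be estimated directly, so one must integrate by parts in time and re-insert the deposit equation, and the bookkeeping has to be arranged so that the final constant depends only on $M_0$ (through the mollified gradient) and on the data — and, crucially, not on the truncation threshold $M$, which is precisely the feature needed for the contraction argument for $\mathbf{T}$.
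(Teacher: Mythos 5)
Your proposal is correct and follows essentially the same route as the paper: the paper's own proof simply invokes the estimate (\ref{eq:P2-revision-19}), which was obtained in the proof of Lemma~\ref{lemma:P2-existence} by exactly the computation you describe (testing the Galerkin system with $\partial_t\coll_{i,n}$ and $\partial_t\dep_{i,n}$, integrating the surface coupling by parts in time, and controlling the cross-diffusion term via (\ref{eq:thermal.mollified-gradient-property}) so that the constant depends on $M_0$), and then lets $n\to\infty$. Your additional observation that the $L^\infty$ bounds of Lemma~\ref{lemma:P2-positivity-boundedness} render the truncation inactive is a harmless refinement the paper does not insist on at this stage.
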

\begin{proof}
  {
    Let $\coll_{i,n}$ be an approximate solution defined in the proof of Lemma~\ref{lemma:P2-existence}
    for each $n$. Then from (\ref{eq:P2-revision-19}) there exists a positive constant $C(M_0)$
    depending on $M_0$ such that
    \begin{align}
      \label{eq:P2-rev-19-2}
      \intt \| \partial _t\coll_{i,n}\| ^2\le C(M_0),&&\text{for each }n.
    \end{align}
    By letting $n\to\infty$ we have proved this Lemma. }
\end{proof}

\begin{lemma}{{\bf The boundedness of the temperature gradient for} \nameref{auxiliary:1}.}\\
  \label{lemma:P1-grad-boundedness}\\
  \INPUT{Let $\bcoll_i\in K(T,M_0)$ and assume \nameref{assumption:1}-\nameref{assumption:2} to hold}.
  Then there exists a positive constant $C(M_0)$ such that $\| \nabla \temp(t)\| \le C(M_0)$ and $\int_0^T\partial_t  \| \temp(t)||^2dt\leq C(M_0)$ for $t\in \Time$.
\end{lemma}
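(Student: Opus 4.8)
The plan is to reuse the uniform-in-$n$ bounds already derived for the Galerkin approximations $\temp_n$ inside the proof of Lemma~\ref{lemma:P1-existence}, in the same spirit as the short proof of Lemma~\ref{lemma:P1-grad-boundedness-coll}. Recall that, after testing the finite-dimensional identity (\ref{eq:P1-weak-finite}) with $\phi=\partial_t\temp_n$ and combining the mollifier property (\ref{eq:thermal.mollified-gradient-property}) with the first energy estimate (\ref{eq:P1-energy-estimate-1}), the computation leading to (\ref{eq:P1-energy-estimate-2}) produces
\begin{equation*}
  \kappa_0\|\nabla\temp_n(t)\|^2+\int_0^t\|\partial_t\temp_n\|^2\le C\qquad\text{for all }t\in(0,T),
\end{equation*}
with $C$ independent of $n$ and $\varepsilon$. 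Thus essentially all of the analytic work is already done, and what remains is bookkeeping and a limit passage.

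First I would make the dependence of $C$ on $M_0$ explicit. The datum $\bcoll_i$ enters the right-hand side of (\ref{eq:P1-rev-20}) only through the factor $\sum_{i=1}^N\|\bcoll_i\|_\lia$, and since $\bcoll_i\in K(T,M_0)$ we have $\|\bcoll_i\|_\lia\le M_0$. Hence the Gronwall constant takes the form $C=C(M_0)$ (still depending on $\delta$, $T$ and the fixed coefficients of \nameref{assumption:1}, but not on $n$ or $\varepsilon$), which yields the two uniform-in-$n$ bounds $\|\nabla\temp_n(t)\|^2\le C(M_0)/\kappa_0$ for every $t\in(0,T)$ and $\int_0^T\|\partial_t\temp_n\|^2\le C(M_0)$.

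Then I would pass to the limit $n\to\infty$. As recorded after (\ref{eq:P1-energy-estimate-2}), a subsequence satisfies $\temp_{n_k}\weakTo\temp$ in $\hlta$ and $\temp_{n_k}\weakStarTo\temp$ in $\liha$. Weak lower semicontinuity of the $L^2(0,T;L^2(\A))$-norm of the time derivative gives $\int_0^T\|\partial_t\temp\|^2\le\liminf_k\int_0^T\|\partial_t\temp_{n_k}\|^2\le C(M_0)$, which is the second assertion. For the gradient, the uniform pointwise bound reads $\sup_{t\in(0,T)}\|\nabla\temp_{n_k}(t)\|\le\sqrt{C(M_0)/\kappa_0}$, so $\{\temp_{n_k}\}$ is bounded in $\liha$; weak-$*$ lower semicontinuity of the $\liha$-norm then transfers the bound to $\temp$, giving $\|\nabla\temp(t)\|\le\sqrt{C(M_0)/\kappa_0}$ for a.e.\ $t$.

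I expect the only delicate point — and the one I would flag as the main obstacle — to be upgrading this bound from ``a.e.\ $t$'' to ``every $t\in(0,T)$'' as stated. For that I would use that $\temp\in\hlta\cap\liha$ embeds into $C([0,T];\lta)$ and is weakly continuous as an $H^1(\A)$-valued map; evaluating the weak-$*$ bound on this representative makes $\|\nabla\temp(t)\|\le C(M_0)$ genuinely pointwise in $t$. Everything else is a direct transcription of the estimates established in Lemma~\ref{lemma:P1-existence}.
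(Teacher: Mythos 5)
Your proposal follows essentially the same route as the paper: the authors likewise invoke the Galerkin-level estimate (\ref{eq:P1-rev-20}), observe that the resulting Gronwall constant depends on $\bcoll_i$ only through $\|\bcoll_i\|_\lia\le M_0$, and pass to the limit $n\to\infty$ exactly as in Lemma~\ref{lemma:P1-grad-boundedness-coll}. Your additional remarks on weak lower semicontinuity and on upgrading the gradient bound from a.e.\ $t$ to all $t$ are correct elaborations of details the paper leaves implicit.
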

\begin{proof}
  From (\ref{eq:P1-rev-20}) we can prove this lemma in the similar way
  to that of Lemma~\ref{lemma:P1-grad-boundedness-coll}.
\end{proof}
\begin{theorem}{\bf Existence and uniqueness of weak solutions \model}\\
  \INPUT{Let \nameref{assumption:1}-\nameref{assumption:2} hold.}
  Then there exists a unique solution to \model.
\end{theorem}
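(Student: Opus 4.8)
The plan is to realize the coupled system \model{} as a fixed point of the solution operator $\operatorT$ introduced above and to invoke the Banach fixed point theorem, as announced. Write $w=(\btemp,\bcoll_1,\dots,\bcoll_N)$ and regard $\operatorT$ as the map sending $w$ to $(\temp,\coll_1,\dots,\coll_N)$, where $\temp$ solves \nameref{auxiliary:1} with datum $\bcoll_i$ and the $\coll_i$ are the colloid components of the solution of \nameref{auxiliary:2} with datum $\btemp$; once a fixed point is found, the deposited masses $\dep_i=\mathbf{T}_2(w)$ are recovered from the pointwise-in-time linear ODE \eqref{eq:P2-dep-weak}. First I would fix the truncation level: by Lemma~\ref{lemma:P1-positivity-boundedness} and Lemma~\ref{lemma:P2-positivity-boundedness} the outputs satisfy $0\le\temp\le\|\temp^0\|_\lia$ and $0\le\coll_i\le M_i(T+1)$ with $M_i$ independent of $M$, so choosing $M\ge\max\{\|\temp^0\|_\lia,\max_i M_i(T+1)\}$ makes the closed set $K(T,M)^{N+1}$ (intersected with the nonnegative cone) invariant under $\operatorT$ and, simultaneously, renders the cut-off $\sigma_M$ inactive on its range; hence any fixed point solves the original untruncated problem \model.

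The core of the argument is the contraction estimate. Given two data $w^{(1)},w^{(2)}\in K(T,M)^{N+1}$, denote the differences of the corresponding outputs by $\Theta$ (temperatures), $U_i$ (colloids), $V_i$ (deposits), and the input differences by $\bar\Theta=\btemp^{(1)}-\btemp^{(2)}$, $\bar U_i=\bcoll_i^{(1)}-\bcoll_i^{(2)}$. Subtracting the two weak formulations of \nameref{auxiliary:1} and testing with $\Theta$, the only genuinely coupled contribution is the cross-diffusion term, which splits as $\nabla^\delta\bcoll_i^{(1)}\cdot\nabla\Theta+\nabla^\delta\bar U_i\cdot\nabla\temp$, the last factor being the output temperature gradient of the second datum. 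The first piece is absorbed into $\kappa_0\|\nabla\Theta\|^2$ exactly as in Lemma~\ref{lemma:P1-existence}; for the second I would use the mollifier property \eqref{eq:thermal.mollified-gradient-property} in the form $\|\nabla^\delta\bar U_i\|_\lia\le c^\delta\|\bar U_i\|$ together with the uniform bound $\|\nabla\temp\|\le C(M)$ valid for the $P_1$-output of either datum (Lemma~\ref{lemma:P1-grad-boundedness}), obtaining a term $\le c^\delta C(M)\|\bar U_i\|\,\|\Theta\|$. Since the initial data coincide, $\Theta(0)=0$, and Gronwall's lemma yields
\begin{equation*}
  \|\Theta\|^2_\lilta\le C(M,T)\sumN\int_0^T\|\bar U_i(s)\|^2\,ds.
\end{equation*}
An entirely parallel computation on \nameref{auxiliary:2} — testing the colloid equations with $U_i$ and the deposition equations with $V_i$, absorbing the self-interaction $\nabla^\delta\btemp^{(1)}\cdot\nabla U_i$, controlling $\nabla^\delta\bar\Theta\cdot\nabla\coll_i$ via \eqref{eq:thermal.mollified-gradient-property} and the gradient bound of Lemma~\ref{lemma:P1-grad-boundedness-coll}, and exploiting the global Lipschitz continuity of the truncated rate $R_i^M$ to handle the Smoluchowski differences — gives $\|U\|^2_\lilta+\|V\|^2_\liltg\le C(M,T)\int_0^T\|\bar\Theta(s)\|^2\,ds$.

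Adding these estimates shows that, measured in $\ltlta$, the output difference is bounded by $C(M,T)\,\|w^{(1)}-w^{(2)}\|_{\ltlta}$ with a constant that vanishes as $T\to0$. Thus $\operatorT$ is a contraction on a short time interval; equivalently, because of the Volterra (time-integrated) structure of the right-hand sides, one may pass to a Bielecki-type weighted norm $\|\cdot\|_\lambda^2=\int_0^T e^{-\lambda t}\|\cdot(t)\|^2\,dt$ and make the contraction constant $<1$ on all of $(0,T)$ by taking $\lambda$ large. The Banach fixed point theorem then provides a unique $(\temp,\coll_i)$, and the linear deposition ODE determines $\dep_i$ uniquely; the regularity, positivity and boundedness required by Definition~\ref{def:weak-solution} are inherited from Lemmas~\ref{lemma:P1-existence}--\ref{lemma:P1-grad-boundedness}. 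I expect the contraction step to be the main obstacle: the difficulty is precisely that the cross-diffusion couplings are first order in the unknowns, so the difference estimate closes only because the mollification upgrades an $L^2$ control of the data difference into an $\lia$ bound on $\nabla^\delta$, while the output gradients stay uniformly bounded — this is the structural reason the mollified gradient and the gradient-boundedness lemmas were introduced. If one prefers to avoid the short-time/weighted-norm device, the same estimate yields local-in-time existence and uniqueness, and the uniform a priori bounds permit continuation to the full interval $(0,T)$.
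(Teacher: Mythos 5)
Your proposal follows essentially the same route as the paper: the same solution operator $\operatorT$ on the invariant set $K(T,M)^{N+1}$ fixed via the positivity/boundedness lemmas, the same splitting of the cross-diffusion differences controlled by the mollifier property \eqref{eq:thermal.mollified-gradient-property} and the gradient-boundedness lemmas, Gronwall, a short-time contraction, Banach's fixed point theorem, and continuation to $[0,T]$. The only (inessential) variation is your suggested Bielecki weighted-norm alternative, which the paper does not use; otherwise the argument matches.
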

\begin{proof}
  For any $M>0$, $X_M:=K(M,T)\times K(M,T)^N$ is a closed set of
  $X:= L^2(0,T; L^2(\Omega^{\varepsilon}))^{N+1}$.
  Let $\btemp_1, \btemp_2, \bcoll_{i,1}, \bcoll_{i,2}\in K(M,T)$,
  for $i\inRange{N}$, and put $\btemp:=\btemp_1-\btemp_2$, $\bcoll_{i}:=\bcoll_{i,1}-\bcoll_{i,2}$,
  {  $(\temp_1,\coll_{i,1})=\operatorT(\btemp_1,\bcoll_1)$ and
    $(\temp_2,\coll_{i,2})=\operatorT(\btemp_2,\bcoll_2)$,
    $\dep_{i,1} = \mathbf{T}_2((\btemp_1,\bcoll_1)$ and
    $\dep_{i,2} = \mathbf{T}_2((\btemp_2,\bcoll_2)$. }
  Moreover, we define
  $\temp=\temp_1-\temp_2$ and $\coll_i=\coll_{i,1}-\coll_{i,2}$ and $\dep_i=\dep_{i,1}-\dep_{i,2}$.

  By Lemma \ref{lemma:P1-positivity-boundedness} and Lemma \ref{lemma:P2-positivity-boundedness},
  $\operatorT: X_M\to X_M$ for $M>\max(\|\temp^0\|_\lia,M_1, M_2(T+1),\ldots,M_N(T+1))$.
  Hence, we want to prove the existence of a positive constant $C<1$ such that
  \begin{equation*}
    \|\operatorT(\btemp_1,\bcoll_{i,1})-\operatorT(\btemp_2,\bcoll_{i,2})\|_X\le
    C\|(\btemp_1,\bcoll_{i,1})-(\btemp_2,\bcoll_{i,2})\|_X
  \end{equation*}
  for small $T>0$.
  Substituting $\temp_1,\temp_2,\coll_{i,1},\coll_{i,2},\dep_1,\dep_2$ into the formulation:
  \begin{align*}
    &        \int[A] \partial _t\temp_1(\temp_1-\temp_2)
      +        \int[A] \cond \nabla \temp_1\nabla (\temp_1-\temp_2)
      + {\varepsilon} \cflux_0\int[ARobin] \temp_1(\temp_1-\temp_2)\\
    &\qquad
      =\sumN\int[A]  \soret  \nabla ^\delta\bcoll_{i,1}\cdot \nabla \temp_1(\temp_1-\temp_2),\\
    &        \int[A] \partial _t\temp_2(\temp_2-\temp_1)
      +        \int[A] \cond\nabla \temp_2\nabla (\temp_2-\temp_1)
      +  {\varepsilon} \cflux_0\int[ARobin] \temp_2(\temp_2-\temp_1)\\
    &\qquad
      =\sumN\int[A]  \soret \nabla ^\delta\bcoll_{i,2}\cdot \nabla \temp_2(\temp_2-\temp_1).
  \end{align*}
  Adding the last two equations we obtain:
  \begin{align*}
    &\half\partial _t\| \temp\| ^2+\cond^0\| \nabla \temp\| ^2+\cflux_0\| \temp\| ^2_\ltr
    \\&\qquad\le
        \tau_* \sumN
        \big|
        \underbrace{\inta(\nabla ^\delta\bcoll_{i,1}\cdot \nabla \temp_1-\nabla ^\delta\bcoll_{i,2}\cdot \nabla \temp_2)(\temp_1-\temp_2)}_A
        \big|.
  \end{align*}
  The term $A$ can be expressed as:
  \begin{align*}
    A&=\inta (\nabla ^\delta\bcoll_{i,1}\cdot \nabla \temp_1
       -\nabla ^\delta\bcoll_{i,2}\cdot \nabla \temp_1)(\temp_1-\temp_2)
    \\
     &+\inta (\nabla ^\delta\bcoll_{i,2}\cdot \nabla \temp_1-\nabla ^\delta\bcoll_{i,2}\cdot \nabla \temp_2)(\temp_1-\temp_2)\\
     &=
       \underbrace{\inta\nabla ^\delta\bcoll_i\cdot \nabla \temp_1\temp}_{A_1}
       +\underbrace{\inta\nabla ^\delta\bcoll_{i,2}\cdot \nabla \temp \temp}_{A_2}.
  \end{align*}
  With the help of Lemma~\ref{lemma:P1-grad-boundedness}, the terms $B$ and $C$ can be estimated
  as follows:
  \begin{align*}
    A_1&\le c^\delta M\| \bcoll_i\| ^2 + C(M)^2 \| \temp\| ^2, \\
    A_2 &\le c^\delta\| \bcoll_{i,2}\| _\infty (\eta\| \nabla\temp\| ^2+\frac{1}{4\eta}\| \temp\| ^2) \text{ for } \eta > 0.
  \end{align*}

  Looking at the formulation for the concentrations, we have:
  \begin{align*}
    &\int[A] \partial _t\coll_{i,1}(\coll_{i,1}-\coll_{i,2})
      +\int[A] \diff_i\nabla \coll_{i,1}\cdot \nabla (\coll_{i,1}-\coll_{i,2}) \\
    &\qquad
      +  {\varepsilon} a_i\intg\coll_{i,1}(\coll_{i,1}-\coll_{i,2})
      -  {\varepsilon} b_i\intg\dep_{i,1}(\coll_{i,1}-\coll_{i,2})\\
    &\qquad
      =\int[A] {\delta_i} \delta_i \nabla ^\delta\btemp_1\cdot\coll_{i,1}(\coll_{i,1}-\coll_{i,2})+
      \int[A] R_i(\coll_1)(\coll_{i,1}-\coll_{i,2}),\\
    &\int[A]\partial _t\coll_{i,2}(\coll_{i,2}-\coll_{i,1})
      +\int[A]\diff_i\nabla \coll_{i,2}\cdot \nabla (\coll_{i,2}-\coll_{i,1}) \\
    &\qquad
      +  {\varepsilon} a_i\intg\coll_{i,2}(\coll_{i,2}-\coll_{i,1})
      -  {\varepsilon} b_i\intg\dep_{i,2}(\coll_{i,2}-\coll_{i,1})\\
    &\qquad
      = \int[A] {\delta_i} \nabla ^\delta\btemp_2\cdot\coll_{i,2}(\coll_{i,2}-\coll_{i,1})+
      \int[A] R_i(\coll_2)(\coll_{i,2}-\coll_{i,1}).
  \end{align*}
  We also test the deposition equation with $\dep_i$ to obtain:
  \begin{align*}
    \half  {\partial_t} \|\dep_i\|_\ltg^2=
    { \int[G] a_i \dep_i \coll_i- b_i \| \dep_i\|_\ltg^2. }
  \end{align*}
  After adding the three above equations, we obtain:
  \begin{align*}
    &\half\partial _t\| \coll_i\| ^2
      +\half\partial _t\|\dep_i\|_\ltg^2
      +  {d_0}  \| \nabla \coll_i\| ^2
      +  {\varepsilon a_0} \| \coll_i\| ^2_\ltg\\
    &\qquad
      \le
      (a_* + {\varepsilon} b_*)\intg|\dep_i \coll_i|
      +\inta |(\nabla ^\delta\btemp_1\cdot \nabla \coll_{i,1}-\nabla ^\delta\btemp_2\cdot \nabla \coll_{i,2})\coll_i|
    \\&\qquad
        +\inta |(R_i(u_1)-R_i(u_2))u_i|,
  \end{align*}

  \begin{align*}
    &\half\partial _t\| \coll_i\| ^2
      +\half\partial _t\|\dep_i\|_\ltg^2
      +{d_0} \| \nabla \coll_i\| ^2
      +(a_0 -\eta)\| \coll_i\| ^2_\ltg
    \\
    \le   &\qquad
         \frac{({ a_*  +  \varepsilon b_*} )^2}{4\eta}\| \dep_i\| _\ltg^2
         +\underbrace{ \delta_* \inta |\nabla ^\delta\btemp_1\cdot \nabla \coll_i \coll_i|}_{B_1}\\
    &\qquad
      +\underbrace{ \delta_* \inta |\nabla \coll_{i,2}\cdot \nabla ^\delta\btemp \coll_i|}_{B_2}
      +\underbrace{\inta |(R_i(\coll_1)-R_i(\coll_2))\coll_i|}_{B_3},
  \end{align*}
  where the sub-expressions can be estimated as:
  \begin{align*}
    &B_1
      \le \eta\| \nabla \coll_i\| ^2+\frac{1}{4\eta} c^\delta \| \btemp_1\| ^2_\infty\| \coll_i\| ^2
      \text{ for } \eta > 0, \\
    &B_2 \le c^\delta C(M) \| \btemp\| ^2 + C(M) \| \coll_i\| ^2.
    \\\intertext{Note that with the boundedness of $\coll_i$ we can treat $R_i^M$ as a  Lipschitz continuous function with the Lipschitz constant $C_L$:}
    &B_3 \le C_L\| \coll_i\| ^2.
  \end{align*}
  Adding up the estimates for the temperature and concentrations:
  {
    \begin{align*}
      &  \frac{d}{dt} (\| \coll_i\| ^2 +  \| \dep_i\| ^2 + \| \temp\| ^2) +
        d_0 \| \nabla \coll_i\| ^2 + \kappa_0 \| \nabla \temp\| ^2 \\
      &\quad
        \le  c_1\| \coll_i\| ^2 + c_2\| \dep_i\| ^2 + c_3\| \temp\| ^2
        + c^\delta M (\| \bcoll_i\| ^2 + \| \btemp\| ^2).
    \end{align*} }
  Gronwall's lemma gives the estimate:
  \begin{align*}
    \| \temp(t)\| ^2+\| \coll_i(t)\| ^2\le
    C\left(
    \| \btemp\| ^2_\ltlta + \| \bcoll_i\| ^2_\ltlta
    \right).
  \end{align*}
  Integrating over $\Time$, we have:
  \begin{align*}
    \int[Time]\| \temp(t)\| ^2+\| \coll_i(t)\| ^2\le
    CT\left(
    \| \btemp\| ^2_\ltlta + \| \bcoll_i\| ^2_\ltlta
    \right).
  \end{align*}
  Accordingly, $\operatorT$ is a contraction mapping for $T'$ such that $CT'<1$.
  Then the Banach fixed point theorem shows that \model
  admits a unique solution in the sense of Definition \ref{def:weak-solution}
  on $[0,T']$. Next, we consider \model on $[T',T]$. Then we can solve uniquely this problem
  on $[T',2T']$. Recursively, we can construct a solution of \model on the whole
  interval $[0,T]$.
\end{proof}

\section{Passing to $\varepsilon\to 0$ (the homogenization limit)}\label{homogenization}

\subsection{Preliminaries on periodic homogenization}

Now that the well-posedness of our microscopic system is available, we can
investigate what happens as the parameter $\varepsilon$
vanishes. Recall that $\varepsilon$ defines both the microscopic
geometry and the periodicity in the model parameters.

\newcommand\var{u^{\varepsilon}}
\newcommand\varseq{(\var)}
\index{Two-scale convergence}
\begin{definition}
  \label{def:two-scale-convergence}
  (Two-scale convergence \cite{nguetseng1989general},\cite{allaire1992homogenization}).
  \renewcommand\A{\Omega}
  Let $\varseq$ be a sequence of functions in $\ltlta$, where
  $\A$ is an open set in $\Rdim{n}$ and $\varepsilon>0$ tends to $0$.
  $\varseq$ two-scale converges to a unique function $u_0(t,x,y)\in L^2((0,T)\times\Omega\times Y)$
  if and only if for all $\phi\in C_0^\infty((0,T)\times\Omega,C^\infty_{\#}(Y))$ we have:
  \begin{equation}
    \label{eq:two-scale-convergence}
    \lim_{\varepsilon\to 0}\int[Time]\inta \var\phi(t,x,\frac{x}{\varepsilon})dxdt=
    \frac{1}{|Y|}\int[Time]\inta\int\limits_{Y}u_0(t,x,y)\phi(t,x,y)dydxdt.
  \end{equation}
  We denote (\ref{eq:two-scale-convergence}) by $\var \twoScaleTo u_0$.
\end{definition}

The space $C^\infty_{\#}(Y)$ refers to the space of all $Y$-periodic  $C^\infty$-functions. The spaces $H^1_\#(Y)$ and $C^\infty_{\#}(\Gamma)$ have a similar meaning; the index $\#$ is always indicating that is about $Y$-periodic functions.

\begin{theorem} (Two-scale compactness on domains)
  \label{thm:two-scale-compactness}
  \renewcommand\A{\Omega}
  \begin{enumerate}[(i)]
  \item From each bounded sequence $\varseq$ in $\ltlta$, a subsequence may be extracted
    which two-scale converges to $u_0(t,x,y)\in L^2(\Time\times\A\times Y)$.
  \item Let $\varseq$ be a bounded sequence in $ L^2(0,T ; H^1(\Omega))$, then there exists
    $\tilde{u}\in L^2((0,T)\times \Omega;  H_\#^1(Y))$ such that up to a subsequence $\varseq$
    two-scale converges to $u_0\in \ltlta$ and $\nabla \var \twoScaleTo \nabla _xu_0 + \nabla _y\tilde{u}$.
  \end{enumerate}
\end{theorem}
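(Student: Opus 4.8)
The plan is to follow the classical functional-analytic strategy of Nguetseng and Allaire, treating the two statements separately and building (ii) on top of (i). For part (i), I would realize the two-scale limit as the representative of a limiting linear functional. For each $\varepsilon$ define
\[
L_\varepsilon(\phi):=\int_0^T\int_\Omega u^\varepsilon(t,x)\,\phi\!\left(t,x,\tfrac{x}{\varepsilon}\right)dx\,dt
\]
on the test space $C_0^\infty((0,T)\times\Omega;C^\infty_\#(Y))$. The first ingredient is the oscillation (mean-value) lemma: for every admissible $\phi$, the map $(t,x)\mapsto\phi(t,x,x/\varepsilon)$ is jointly measurable and
\[
\big\|\phi(\cdot,\cdot,\cdot/\varepsilon)\big\|_{L^2((0,T)\times\Omega)}^2 \longrightarrow \frac{1}{|Y|}\int_0^T\int_\Omega\int_Y |\phi(t,x,y)|^2\,dy\,dx\,dt ,
\]
which follows from the Riemann--Lebesgue property of $Y$-periodic functions applied cell-by-cell. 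Combined with $\|u^\varepsilon\|_{L^2((0,T)\times\Omega)}\le C$ and Cauchy--Schwarz, this gives $\limsup_\varepsilon|L_\varepsilon(\phi)|\le C|Y|^{-1/2}\|\phi\|_{L^2((0,T)\times\Omega\times Y)}$.

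Since the test space is separable, I would then extract by a diagonal argument a subsequence along which $L_\varepsilon(\phi)$ converges to a limit $L(\phi)$ for every $\phi$. The bound above passes to the limit and shows $L$ is continuous for the $L^2((0,T)\times\Omega\times Y)$-norm; by density and the Riesz representation theorem there is a unique $u_0\in L^2((0,T)\times\Omega\times Y)$ with $L(\phi)=|Y|^{-1}\int_0^T\int_\Omega\int_Y u_0\,\phi$, which is exactly $u^\varepsilon\twoScaleTo u_0$, proving (i).

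For part (ii), the extra bound on $\nabla u^\varepsilon$ in $L^2$ lets me apply (i) to $u^\varepsilon$ and to each component of $\nabla u^\varepsilon$, yielding $u^\varepsilon\twoScaleTo u_0$ and $\nabla u^\varepsilon\twoScaleTo\xi_0$ along a common subsequence. To identify the structure I would test $\nabla u^\varepsilon$ against oscillating fields $\Psi(t,x,x/\varepsilon)$ with $\Psi\in C_0^\infty((0,T)\times\Omega;C^\infty_\#(Y))^n$. Integrating by parts, multiplying by $\varepsilon$, and letting $\varepsilon\to0$ annihilates the left-hand side and forces $\int_0^T\int_\Omega\int_Y u_0\,\mathrm{div}_y\Psi=0$ for all such $\Psi$, hence $\nabla_y u_0=0$ and $u_0=u_0(t,x)$. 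Restricting next to $\Psi$ with $\mathrm{div}_y\Psi=0$, I would pass to the two-scale limit on one side and integrate by parts in $x$ on the other (using the $y$-independence of $u_0$) to obtain $\int_Y(\xi_0-\nabla_x u_0)\cdot\Psi\,dy=0$ for every divergence-free $Y$-periodic $\Psi$. The Helmholtz decomposition of $L^2_\#(Y)^n$ then identifies $\xi_0-\nabla_x u_0$ as a $y$-gradient, so $\xi_0=\nabla_x u_0+\nabla_y\tilde u$ with $\tilde u(t,x,\cdot)\in H^1_\#(Y)$, completing the proof.

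The main obstacle is twofold. In (i) the delicate point is the oscillation lemma together with the admissibility of the test functions: one must guarantee that $\phi(t,x,x/\varepsilon)$ is a genuine Carath\'eodory function that equidistributes over the $\varepsilon$-cells, since this is what legitimizes both the uniform functional bound and the Riesz identification of the limit. In (ii) the crux is the final step, namely that orthogonality to all divergence-free $Y$-periodic fields forces a pure $y$-gradient; this rests on the Weyl--Helmholtz decomposition on the torus and the solvability of the associated cell problem in $H^1_\#(Y)/\mathbb{R}$.
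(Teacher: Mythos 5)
Your proposal is a correct rendition of the standard Nguetseng--Allaire compactness argument (the oscillation lemma plus Riesz representation for (i), and the oscillating test-field/Helmholtz decomposition argument for (ii)). The paper does not prove this theorem at all --- its ``proof'' is only the citation to \cite{nguetseng1989general} and \cite{allaire1992homogenization} --- and your argument is essentially the one found in those cited references, so there is nothing to compare beyond noting that you supplied the details the paper chose to omit.
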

\begin{proof}
  See e.g. \cite{nguetseng1989general},\cite{allaire1992homogenization}.
\end{proof}

\newcommand\ltwoGammaEps{L^2(\Time \times \Gamma_\varepsilon )}
\begin{definition}
  \label{def:two-scale-convergence-periodic}
  (Two-scale convergence for $\varepsilon $-periodic hypersurfaces \cite{neuss1996some}).
  A sequence of functions $\varseq\in \ltwoGammaEps$ is said to two-scale
  converge to a limit $u_0\in L^2(\Time \times\A\times \Gamma)$ if and only if
  for all $\phi\in C_0^\infty(\Time \times \A; C_\#^\infty(\Gamma))$ we have
  \begin{equation}
    \label{eq:two-scale-convergence-periodic}
    \lim_{\varepsilon \to 0}\varepsilon \int[Time]\int\limits_{\Gamma_\varepsilon} \var\phi(t,x,\frac{x}{\varepsilon})
    =\frac{1}{|Y|}\int[Time]\int\limits_\Omega\int\limits_\Gamma u_0(t,x,y)\phi(t,x,y) d\gamma_ydxdt.
  \end{equation}
\end{definition}

\begin{theorem} (Two-scale compactness on surfaces)
  \label{thm:two-scale-compactness-two}
  \begin{enumerate}[(i)]
  \item From each bounded sequence $\varseq\in \ltwoGammaEps$ one can extract a subsequence
    $\var$ which two-scale converges to $u_0\in L^2(\Time\times\Omega\times\Gamma)$.
  \item If a sequence $\varseq$ is bounded in $L^\infty(\Time\times \Gamma_\varepsilon)$, then
    $\var$  two-scale converges to a $u_0\in L^\infty(\Time\times\Omega\times\Gamma)$
  \end{enumerate}
\end{theorem}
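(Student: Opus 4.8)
The plan is to run the standard Riesz--representation argument behind two-scale convergence, transplanted to the oscillating surface measure carried by $\Gamma_\varepsilon$, following \cite{neuss1996some,allaire1992homogenization}. The whole scheme rests on one oscillation estimate for admissible test functions $\phi\in C_0^\infty(\Time\times\Omega;C_\#^\infty(\Gamma))$, namely
\begin{equation*}
  \varepsilon\int[Time]\int\limits_{\Gamma_\varepsilon}\Big|\phi\big(t,x,\tfrac{x}{\varepsilon}\big)\Big|^2\,d\sigma\,dt
  \;\longrightarrow\;
  \frac{1}{|Y|}\int[Time]\int\limits_\Omega\int\limits_\Gamma|\phi(t,x,y)|^2\,d\gamma_y\,dx\,dt
  \qquad(\varepsilon\to0).
\end{equation*}
This is a Riemann-sum statement for periodic surface integrals: the factor $\varepsilon$ exactly offsets the growth $|\Gamma_\varepsilon|\sim\varepsilon^{-1}$ produced by the $\sim\varepsilon^{-n}$ cells each of surface area $\sim\varepsilon^{n-1}$, and it pins down the normalisation $1/|Y|$. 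I expect this to be the main obstacle, since it requires comparing the genuine hypersurface integral over $\Gamma_\varepsilon$ with its cell-averaged continuum counterpart; one controls the discrepancy by splitting into periodicity cells and invoking the uniform continuity of $\phi$ together with the smoothness of $\Gamma$.

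Granting this estimate, for part (i) I would introduce, for each $\varepsilon$, the linear functional
\begin{equation*}
  L_\varepsilon(\phi):=\varepsilon\int[Time]\int\limits_{\Gamma_\varepsilon}\var\,\phi\big(t,x,\tfrac{x}{\varepsilon}\big)\,d\sigma\,dt .
\end{equation*}
Cauchy--Schwarz bounds $|L_\varepsilon(\phi)|$ by the product of $\big(\varepsilon\int_{\Time\times\Gamma_\varepsilon}|\var|^2\big)^{1/2}$ and $\big(\varepsilon\int_{\Time\times\Gamma_\varepsilon}|\phi(t,x,\tfrac{x}{\varepsilon})|^2\big)^{1/2}$; the first factor stays bounded by the (scaled) hypothesis $\varepsilon\int_{\Time\times\Gamma_\varepsilon}|\var|^2\le C$, while the second converges to $\|\phi\|_{L^2(\Time\times\Omega\times\Gamma)}$ by the oscillation estimate. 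Hence $\{L_\varepsilon\}$ is bounded, uniformly in $\varepsilon$, by $C\,\|\phi\|_{L^2(\Time\times\Omega\times\Gamma)}$. Since $L^2(\Time\times\Omega\times\Gamma)$ is a separable Hilbert space in which the admissible test functions are dense, a Banach--Alaoglu plus diagonal-extraction argument produces a subsequence along which $L_\varepsilon(\phi)$ converges for every $\phi$ to a bounded functional $L$; by the Riesz representation theorem there is a unique $u_0\in L^2(\Time\times\Omega\times\Gamma)$ with
\begin{equation*}
  L(\phi)=\frac{1}{|Y|}\int[Time]\int\limits_\Omega\int\limits_\Gamma u_0\,\phi\,d\gamma_y\,dx\,dt ,
\end{equation*}
which is exactly $\var\twoScaleTo u_0$. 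Uniqueness of the limit follows from density of the test functions.

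For part (ii), a bound $\|\var\|_{L^\infty(\Time\times\Gamma_\varepsilon)}\le C$ yields the scaled $L^2$-bound used above, because $\varepsilon|\Gamma_\varepsilon|$ remains bounded as $\varepsilon\to0$; thus part (i) already supplies a two-scale limit $u_0\in L^2(\Time\times\Omega\times\Gamma)$. To upgrade it to $L^\infty$, I would test against an arbitrary $\phi\ge0$ and pass to the limit in $\varepsilon\int_{\Time\times\Gamma_\varepsilon}\var\,\phi\le C\,\varepsilon\int_{\Time\times\Gamma_\varepsilon}\phi$, and in the companion lower bound, to obtain
\begin{equation*}
  \Big|\frac{1}{|Y|}\int[Time]\int\limits_\Omega\int\limits_\Gamma u_0\,\phi\,d\gamma_y\,dx\,dt\Big|
  \le C\,\frac{1}{|Y|}\int[Time]\int\limits_\Omega\int\limits_\Gamma \phi\,d\gamma_y\,dx\,dt .
\end{equation*}
As this holds for every nonnegative $\phi$, it forces $|u_0|\le C$ a.e., i.e.\ $u_0\in L^\infty(\Time\times\Omega\times\Gamma)$ with the same bound, which establishes both parts.
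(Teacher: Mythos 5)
The paper gives no proof of this theorem at all --- it simply cites \cite{neuss1996some} for (i) and \cite{marciniak2008derivation} for (ii) --- and your argument is exactly the standard Riesz-representation proof from those references (oscillation estimate for admissible test functions, Cauchy--Schwarz, uniform boundedness of the linear functionals $L_\varepsilon$, diagonal extraction plus Riesz representation, and the nonnegative-test-function sandwich to upgrade the limit to $L^\infty$ in (ii)), so it is consistent with the paper's intended route. The one caveat is that the oscillation estimate you flag as ``the main obstacle'' is asserted rather than proved, and it is precisely the technical content of the Neuss-Radu lemma; everything built on top of it in your sketch is correct.
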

\begin{proof}
  See \cite{neuss1996some} for proof of (i),
  and \cite{marciniak2008derivation} for proof of (ii).
\end{proof}

\begin{lemma}
  \label{convergence-lemma}
  \renewcommand\A{\Omega}
  \renewcommand\G{\Gamma}
  Let \allAssumptions hold. Denote by $\coll_i$ and $\temp$ the
  Bochner extensions\footnote{For our choice of microstructure, the
    interior extension from $H^1(\Omega^\varepsilon)$ into $H^1(\Omega)$
    exists and the corresponding extension constant is independent of the
    choice of $\varepsilon$; see the standard extension result reported in
    Lemma 5 from \cite{hornung1991diffusion}.} in the space
  $L^2(0,T;H^1(\Omega))$ of the corresponding functions originally
  belonging to $L^2(0,T;H^1(\Omega^\varepsilon))$. Then the following
  statement holds:
  \begin{enumerate}[(i)]
  \item \label{convergence-lemma-1}
    $\coll_i\weakTo\collBase_i$ and $\temp\weakTo\tempBase$ in $\ltha$,
  \item \label{convergence-lemma-2}
    $\coll_i\weakStarTo\collBase_i$ and $\temp\weakStarTo\tempBase$ in $\lilia$,
  \item \label{convergence-lemma-3}
    $\partial _t\coll_i\weakTo\partial _t\collBase_i$ and $\partial _t\temp\weakTo\partial _t\tempBase$ in $\ltlta$,
  \item \label{convergence-lemma-4}
    $\coll_i\to\collBase_i$ and $\temp\to\tempBase$
    strongly in $L^2(0,T;H^\beta(\A))$ for $\half <\beta <1$ and
    $\sqrt{\varepsilon }\| \coll_i-\collBase_i\| _{L^2(\Time\times\Gamma_{\varepsilon})}\to 0$ as $\varepsilon \to 0$,
  \item \label{convergence-lemma-5}
    $\coll_i\twoScaleTo \collBase_i$, $\nabla \coll_i\twoScaleTo\nabla _x\collBase_i+\nabla _y\collBase_i^1$
    where $\collBase_i^1\in L^2(\Time\times\A;H^1_\#(Y))$,
  \item \label{convergence-lemma-6}
    $\temp\twoScaleTo \tempBase$, $\nabla \temp\twoScaleTo\nabla _x\tempBase+\nabla _y\tempBase^1$
    where $\tempBase^1\in L^2(\Time\times\A;H^1_\#(Y))$,
  \item \label{convergence-lemma-7}
    $\dep_i\twoScaleTo\depBase_i\in L^\infty(\Time\times\A\times\G)$
    and
    $\partial _t\dep_i\twoScaleTo\partial _t\depBase_i\in L^2(\Time\times\A\times\G)$.
  \end{enumerate}
\end{lemma}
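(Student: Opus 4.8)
The plan is to read off items~(\ref{convergence-lemma-1})--(\ref{convergence-lemma-3}) directly from the $\varepsilon$-uniform a priori bounds already in hand. Lemmas~\ref{lemma:P1-positivity-boundedness} and~\ref{lemma:P2-positivity-boundedness} give the $L^\infty$ bounds $0\le\temp\le\|\temp^0\|_\lia$ and $0\le\coll_i\le M_i(T+1)$, while Lemmas~\ref{lemma:P1-grad-boundedness-coll} and~\ref{lemma:P1-grad-boundedness} control $\|\nabla\coll_i\|$, $\|\nabla\temp\|$ together with $\int_0^T\|\partial_t\coll_i\|^2$ and $\int_0^T\|\partial_t\temp\|^2$, all with constants independent of $\varepsilon$. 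Applying the Bochner extension operator, whose extension constant is $\varepsilon$-independent for the present microstructure (see the footnote), transfers these bounds to the fixed domain $\Omega$: the extended families are bounded in $L^2(0,T;H^1(\Omega))$, in $L^\infty((0,T)\times\Omega)$, and their time derivatives in $L^2(0,T;L^2(\Omega))$. I would then extract, along one common subsequence, the weak limit in~(\ref{convergence-lemma-1}) by Banach--Alaoglu, the weak-$*$ limit in~(\ref{convergence-lemma-2}), and the weak limit of the time derivatives in~(\ref{convergence-lemma-3}); uniqueness of limits lets me call all three limit objects $\collBase_i$ and $\tempBase$.

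For the strong convergence in~(\ref{convergence-lemma-4}) I would invoke the Aubin--Lions--Simon compactness theorem with the triple $H^1(\Omega)\hookrightarrow\hookrightarrow H^\beta(\Omega)\hookrightarrow L^2(\Omega)$, valid for $\tfrac12<\beta<1$ (the embedding of $H^1$ into $H^\beta$ is compact for $\beta<1$, and $\beta>\tfrac12$ guarantees well-defined traces). Since the extensions are bounded in $L^\infty(0,T;H^1(\Omega))$ with time derivatives in $L^2(0,T;L^2(\Omega))$, this yields, up to a further subsequence, $\coll_i\to\collBase_i$ and $\temp\to\tempBase$ strongly in $L^2(0,T;H^\beta(\Omega))$. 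The boundary statement then follows from the standard $\varepsilon$-scaled trace inequality on perforated domains, namely
\[
\varepsilon\,\|w\|_{L^2(\Gamma_\varepsilon)}^2\le C\bigl(\|w\|_{L^2(\Omega^\varepsilon)}^2+\varepsilon^2\|\nabla w\|_{L^2(\Omega^\varepsilon)}^2\bigr),
\]
applied to $w=\coll_i-\collBase_i$ and integrated over $\Time$: the first term vanishes by the strong $L^2$ convergence, while the second is $\varepsilon^2$ times an $\varepsilon$-uniform gradient bound and hence $O(\varepsilon^2)$, so that $\sqrt{\varepsilon}\,\|\coll_i-\collBase_i\|_{L^2(\Time\times\Gamma_\varepsilon)}\to0$.

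Items~(\ref{convergence-lemma-5}) and~(\ref{convergence-lemma-6}) are immediate applications of Theorem~\ref{thm:two-scale-compactness}(ii) to $\{\coll_i\}$ and $\{\temp\}$, which are bounded in $L^2(0,T;H^1(\Omega))$: along a subsequence they two-scale converge together with their gradients, $\nabla\coll_i\twoScaleTo\nabla_x\collBase_i+\nabla_y\collBase_i^1$ and $\nabla\temp\twoScaleTo\nabla_x\tempBase+\nabla_y\tempBase^1$, with correctors $\collBase_i^1,\tempBase^1\in L^2(\Time\times\Omega;H^1_\#(Y))$. The zeroth-order two-scale limits coincide with the weak limits $\collBase_i,\tempBase$ because the $y$-average of a two-scale limit equals the weak $L^2$ limit, and the $H^1$ bound forces that limit to be $y$-independent. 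For item~(\ref{convergence-lemma-7}) I would use Theorem~\ref{thm:two-scale-compactness-two}: the $\varepsilon$-uniform bound $0\le\dep_i\le\bar{M}_i(T+1)$ places $\{\dep_i\}$ in $L^\infty((0,T)\times\Gamma_\varepsilon)$, giving the surface two-scale limit $\depBase_i\in L^\infty(\Time\times\Omega\times\Gamma)$ via part~(ii); the bound on $\{\dep_i\}$ in $\hltg$ from Lemma~\ref{lemma:P2-existence} (equivalently, $\partial_t\dep_i=a_i\coll_i-b_i\dep_i$ is bounded) gives $\partial_t\dep_i\twoScaleTo\chi$ in $L^2(\Time\times\Omega\times\Gamma)$ via part~(i), and an integration by parts in time against test functions of the form $\phi(t,x,x/\varepsilon)$ identifies $\chi=\partial_t\depBase_i$.

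The main obstacle I expect is the surface analysis rather than the interior weak-compactness steps. Concretely, producing the $\varepsilon$-scaled trace inequality in exactly the form needed and combining it with the interior strong convergence to annihilate the boundary defect in~(\ref{convergence-lemma-4}), together with the time-derivative identification on the oscillating hypersurface $\Gamma_\varepsilon$ in~(\ref{convergence-lemma-7}), both require careful handling of the intrinsic $\varepsilon$-weights of surface integrals. Everything else reduces to bookkeeping of the already-established $\varepsilon$-independent estimates and direct citation of Theorems~\ref{thm:two-scale-compactness} and~\ref{thm:two-scale-compactness-two}.
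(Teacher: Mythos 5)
Your proposal is correct and follows essentially the same route as the paper: $\varepsilon$-uniform energy and $L^\infty$ bounds plus the $\varepsilon$-independent extension give (i)--(iii) by weak/weak-$*$ compactness, Aubin--Lions with $H^1(\Omega)\hookrightarrow\hookrightarrow H^\beta(\Omega)$ for $\tfrac12<\beta<1$ combined with the Hornung--J\"ager trace inequality gives (iv), and Theorems \ref{thm:two-scale-compactness} and \ref{thm:two-scale-compactness-two} give (v)--(vii). The only cosmetic difference is that the paper absorbs the boundary defect through the strong $L^2(0,T;H^\beta)$ convergence in estimate (\ref{eq:eps-estimate}), whereas you split the trace inequality into an $L^2$ term killed by strong convergence and an $O(\varepsilon^2)$ gradient term; both are valid and equivalent in substance.
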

\begin{proof}
  \renewcommand\A{\Omega}
  We obtain (\ref{convergence-lemma-1}) and (\ref{convergence-lemma-2})
  as a direct consequence of the fact that $\coll_i$ and $\temp$
  are uniformly bounded in $\liha\cap \lilia$.
  A similar argument gives (\ref{convergence-lemma-3}).
  We get (\ref{convergence-lemma-4}) using the compact embedding
  $H^{\alpha}(\Omega)\hookrightarrow H^\beta(\Omega)$
  for $\beta\in (\half,1)$ and $0<\beta<\alpha\le 1$, since $\A$ has Lipschitz boundary.
  Note that (\ref{convergence-lemma-4}) implies the strong convergence
  of $\coll_i$ up to the boundary.

  Denote $W:=\{w\in \ltha\text{ and } \partial _tw\in \ltlta\}$. We have $\coll_i,\temp\in W$.
  Using Lions-Aubin lemma \cite{lions1969quelques} we see that $W$ is compactly embedded in
  $L^2(0,T;H^\beta(\A))$ for $\beta\in [0.5,1]$.
  We then use the trace inequality for perforated medium from \cite{hornung1991diffusion},
  namely for all $\phi\in H^1(\A^\varepsilon)$
  there exists a constant $C$ independent of $\varepsilon$ such that:
  \begin{equation}
    \label{eq:hornung-trace}
    \varepsilon \| \phi\| _{L^2(\G)}\le C(\| \phi\| ^2_{L^2(\A^\varepsilon )} + \varepsilon ^2\| \nabla \phi\| ^2_{L^2(\A^\varepsilon )}).
  \end{equation}
  Applying (\ref{eq:hornung-trace}) to $\coll_i-\collBase_i$, we get:
  \begin{align}
    \label{eq:eps-estimate}
    \sqrt{\varepsilon}\| \coll_i-\collBase_i\| ^2_{L^2(\Time\times \G)}
    &\le C\| \coll_i-\collBase_i\| ^2_{L^2(0,T;H^\beta(\A^\varepsilon ))}
      \nonumber\\
    &\le C\| \coll_i-\collBase_i\| ^2_{L^2(0,T;H^\beta(\A))},
  \end{align}
  where $\| \coll_i-\collBase_i\| ^2_{L^2(0,T;H^\beta(\A))}\to 0$ as $\varepsilon \to 0$.
  As for the rest of the statements (\ref{convergence-lemma-5})-(\ref{convergence-lemma-7}), since $\coll_i$ are bounded in $\liha$,
  up to a subsequence we have that $\coll_i\twoScaleTo \collBase_i$ in $\ltlta$,
  and $\nabla \coll_i\twoScaleTo \nabla _x u_i +\nabla _y\collBase_i^1$,
  where $\collBase_i^1\in L^2(\Time\times\A;H^1_\#(Y))$.
  By Theorem \ref{thm:two-scale-compactness-two},
  $\dep_i\twoScaleTo\depBase_i\in L^\infty(\Time\times\A\times\G)$
  and
  $\partial _t\dep_i\twoScaleTo\partial _t\depBase_i\in L^2(\Time\times\A\times\G)$.
\end{proof}
\subsection{Two-scale homogenization procedure}
\renewcommand\Diff{\mathbb{D}}
\newcommand\Soret{\mathbb{T}}
\newcommand\Cond{\mathbb{K}}
\newcommand\Dufour{\mathbb{F}}
\renewcommand\intY{\int\limits_{Y}}
\renewcommand\intG{\int\limits_{\Gamma}}
\newcommand\volY{|Y|}
\newcommand\volG{|\Gamma|}
\newcommand\DCA{A}
\newcommand\DCB{B}
\begin{theorem}
  \label{thm:two-scale-homogenization-procedure}
  \renewcommand\A{\Omega}
  Let \allAssumptions hold.  {
    The  limit functions $\theta$, $u_i$, $v_i$, $\theta^1$ and $u_i^1$ satisfy
    (\ref{two-scale-homogenization-procedure-1b}),
    (\ref{two-scale-homogenization-procedure-2b}) and
    (\ref{two-scale-homogenization-procedure-3b})  for any
    $\alpha\in C^\infty(\Time\times\Omega)$ and $\beta\in C^\infty(\Time\times\Omega;C^\infty_\#(Y))$.
  }
\end{theorem}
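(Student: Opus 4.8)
The plan is to start from the weak formulation of \model recorded in Definition~\ref{def:weak-solution} and to pass to the two-scale limit in each of its three variational identities, using the convergences collected in Lemma~\ref{convergence-lemma} together with the two-scale compactness results of Theorems~\ref{thm:two-scale-compactness} and~\ref{thm:two-scale-compactness-two}. The admissible test functions are taken in the oscillating form $\phi^\varepsilon(t,x)=\alpha(t,x)+\varepsilon\beta(t,x,x/\varepsilon)$, with $\alpha\in C^\infty(\Time\times\Omega)$ and $\beta\in C^\infty(\Time\times\Omega;C^\infty_\#(Y))$; their restrictions to $\Omega^\varepsilon$ (respectively $\Gamma^\varepsilon$) lie in $H^1(\Omega^\varepsilon)$ (respectively $L^2(\Gamma^\varepsilon)$), they satisfy $\phi^\varepsilon\to\alpha$ strongly, and $\nabla\phi^\varepsilon(t,x)=\nabla_x\alpha+\nabla_y\beta(t,x,x/\varepsilon)+O(\varepsilon)$, so the oscillating part enters only through the cell gradient $\nabla_y\beta$.

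First I would dispatch the linear terms. For the diffusion term $\int_{\Omega^\varepsilon}d_i\nabla u_i^\varepsilon\cdot\nabla\phi^\varepsilon$ I combine the two-scale convergence $\nabla u_i^\varepsilon\twoScaleTo\nabla_x u_i+\nabla_y u_i^1$ from Lemma~\ref{convergence-lemma}(\ref{convergence-lemma-5}) with the $Y$-periodicity of $d_i$ to obtain $\frac{1}{|Y|}\int_\Omega\int_Y d_i(y)(\nabla_x u_i+\nabla_y u_i^1)\cdot(\nabla_x\alpha+\nabla_y\beta)$; the heat-conduction term is treated identically via Lemma~\ref{convergence-lemma}(\ref{convergence-lemma-6}). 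The time-derivative terms pass to the limit through Lemma~\ref{convergence-lemma}(\ref{convergence-lemma-3}) and the $y$-independence of the limits $u_i,\theta$, producing the porosity-weighted contributions $\frac{|Y_1|}{|Y|}\int_\Omega\partial_t u_i\,\alpha$, where $Y_1$ is the fluid part of the cell. The $\varepsilon$-scaled surface integrals on $\Gamma^\varepsilon$ (the Robin term $\varepsilon g_0\int_{\Gamma_R^\varepsilon}\theta^\varepsilon\phi^\varepsilon$ and the deposition coupling $\varepsilon\int_{\Gamma^\varepsilon}(a_i u_i^\varepsilon-b_i v_i^\varepsilon)\phi^\varepsilon$) pass to the surface two-scale limit by Theorem~\ref{thm:two-scale-compactness-two}, which absorbs exactly the available factor of $\varepsilon$ and yields averages of the form $\frac{1}{|Y|}\int_\Omega\int_\Gamma(\cdots)\,\alpha$.

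The substantive difficulty lies in the nonlinear terms: the mollified cross-diffusion couplings $\tau\nabla^\delta u_i^\varepsilon\cdot\nabla\theta^\varepsilon$ and $\delta_i\nabla^\delta\theta^\varepsilon\cdot\nabla u_i^\varepsilon$, together with the quadratic Smoluchowski source $R_i(u^\varepsilon)$. The key observation is that $\delta>0$ is held fixed throughout the passage $\varepsilon\to0$, so $\nabla^\delta$ is a continuous smoothing operator acting in $x$ alone. From the strong convergences $\theta^\varepsilon\to\theta$ and $u_i^\varepsilon\to u_i$ in $L^2(0,T;H^\beta(\Omega))$ granted by Lemma~\ref{convergence-lemma}(\ref{convergence-lemma-4}) one deduces $\nabla^\delta\theta^\varepsilon\to\nabla^\delta\theta$ and $\nabla^\delta u_i^\varepsilon\to\nabla^\delta u_i$ strongly, with limits carrying no $y$-dependence. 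Each mollified product is then the pairing of a strongly convergent factor against a two-scale (weakly) convergent gradient, which passes to the limit; since the mollified factor is $y$-independent, only the macroscopic part $\nabla_x$ of the two-scale gradient survives in these terms. The reaction term is handled by the same strong convergence: on the a priori bounded range furnished by Lemma~\ref{lemma:P2-positivity-boundedness}, $R_i$ is Lipschitz, so $R_i(u^\varepsilon)\to R_i(u)$ strongly and pairs against $\alpha$ in the limit.

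Finally I would assemble the three limiting identities and recover (\ref{two-scale-homogenization-procedure-1b}), (\ref{two-scale-homogenization-procedure-2b}) and (\ref{two-scale-homogenization-procedure-3b}): setting $\beta\equiv0$ reads off the macroscopic equations tested against $\alpha$, while $\alpha\equiv0$ isolates the cell equations tested against $\beta$. The main obstacle I anticipate is the rigorous justification of the limit in the mollified cross-diffusion products on the perforated geometry—specifically, confirming that the fixed mollification commutes with the two-scale limit in the sense above, and that the strong $L^2(0,T;H^\beta(\Omega))$ convergence transfers, via the Bochner extension and the trace estimate (\ref{eq:hornung-trace}), to the control needed for the surface couplings involving $u_i^\varepsilon$ and $v_i^\varepsilon$ on $\Gamma^\varepsilon$.
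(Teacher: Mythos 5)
Your proposal is correct and follows essentially the same route as the paper: testing the weak formulation with oscillating functions $\alpha+\varepsilon\beta(t,x,x/\varepsilon)$, passing each term to its two-scale limit via Lemma~\ref{convergence-lemma} and the compactness theorems, and invoking the strong convergence of $u_i^\varepsilon$ and $\theta^\varepsilon$ (with $\delta>0$ fixed) to handle the mollified cross-diffusion products and the Smoluchowski term. Your write-up in fact supplies more justification for the nonlinear and surface terms than the paper's own brief proof does.
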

\begin{proof}
  \newcommand\yy{\frac{x}{\varepsilon}}
  Testing \model with oscillating functions $\phi(t,x)=\alpha(t,x)+\varepsilon \beta(t,x,\frac{x}{\varepsilon })$,
  where $\alpha\in C^\infty(\Time\times\Omega)$ and $\beta\in C^\infty(\Time\times\Omega;C^\infty_\#(Y))$,
  we obtain:
  \begin{align}
    \label{two-scale-homogenization-procedure-1a}
    & \inta \partial _t\temp (\alpha + \varepsilon \beta)
      + \inta \cond(\yy) \nabla \temp (\nabla _x\alpha+\varepsilon \nabla _x\beta+\nabla _y\beta)\nonumber\\
    &\qquad + \cflux_0\varepsilon \intg \temp (\alpha+\varepsilon \beta) =
      \sumN \inta \soret(\yy) \nabla ^\delta\coll_i\cdot \nabla \temp (\alpha+\varepsilon \beta),
    \\\label{two-scale-homogenization-procedure-2a}
    & \inta \partial _t\coll_i (\alpha + \varepsilon \beta)
      + \inta \diff_i(\yy) \nabla \coll_i (\nabla _x\alpha+\varepsilon \nabla _x\beta+\nabla _y\beta)\nonumber\\
    &\qquad  
      +\varepsilon \intg (a_i\coll_i- b_i\dep_i)(\alpha+\varepsilon \beta)\nonumber\\
    &\qquad =
      \inta\dufour_i( \frac{x}{\varepsilon} ) \nabla ^\delta\temp\cdot \nabla \coll_i (\alpha+\varepsilon \beta) + \inta R_i(\coll) (\alpha+\varepsilon \beta),
    \\\label{two-scale-homogenization-procedure-3a}
    & \varepsilon \intg \partial _t\dep_i (\alpha + \varepsilon \beta) =
      \varepsilon \intg ( a_i \coll_i- b_i \dep_i) (\alpha + \varepsilon \beta).
  \end{align}
  Using the concept of two-scale convergence for $\varepsilon \to 0$ in
  (\ref{two-scale-homogenization-procedure-1a}),
  (\ref{two-scale-homogenization-procedure-2a}) and
  (\ref{two-scale-homogenization-procedure-3a})
  yields:
  \ifdefined\included
  \renewcommand\A{\Omega}
  \renewcommand\intg{\int\limits_{\Gamma}}
  \renewcommand\cond{\kappa}
  \renewcommand\diff{d}
  \renewcommand\dca{a}
  \renewcommand\dcb{b}
  \renewcommand\dufour{\delta}
  \renewcommand\soret{\tau}
  \fi
  \begin{align}
    \label{two-scale-homogenization-procedure-1b}
    & \int_{\Omega} \partial _t\tempBase  \alpha
      + \frac{1}{ |Y_1| }\int_{\Omega}  \int_{Y_1}  \kappa(y)
      (\nabla \tempBase +\nabla _y\tempBase^1) (\nabla _x\alpha(x)+\nabla _y\beta(x,y))\nonumber\\
    &\qquad + \cflux_0\frac{|\Gamma_R|}{|Y_1|}\int_{\Omega} \tempBase \alpha =
      \sumN \frac{1}{|Y_1|} \int_{\Omega} \int_{Y_1}  \tau(y) \nabla ^\delta\collBase_i\cdot (\nabla \tempBase+\nabla _y\tempBase^1) \alpha,
    \\\label{two-scale-homogenization-procedure-2b}
    & \int_{\Omega}  \partial _t\collBase_i \alpha
      + \frac{1}{|Y_1|} \int_{\Omega} \int_{Y_1}  d_i(y)
      (\nabla \collBase_i + \nabla _y \collBase_i^1) (\nabla _x\alpha+\nabla _y\beta)\nonumber\\
    &\qquad  
      + \frac{1}{|Y_1|} \int_{\Omega} \intG (a_i \collBase_i- b_i \depBase_i) \alpha\nonumber\\
    &\qquad =
      \frac{1}{|Y_1| }\int_{\Omega} \int_{Y_1} \delta_i(y)
      \nabla ^\delta\tempBase\cdot (\nabla \collBase_i + \nabla _y\collBase_i^1)\alpha
      + \int_{\Omega} R_i(\collBase) \alpha,
    \\\label{two-scale-homogenization-procedure-3b}
    & \int_{\Omega}  \int_{\Gamma} \partial _t\depBase_i \alpha =
      \frac{1}{|Y_1|}\int_{\Omega} \intG (a_i \collBase_i - b_i \depBase_i) \alpha.
  \end{align}
  Note that we have used strong convergence for passing to the limit in the aggregation term in
  (\ref{two-scale-homogenization-procedure-2b}).
\end{proof}

Now we just need to find $\tempBase^1$ and $\collBase_i^1$.

\begin{lemma}
  The limit functions $\theta_1$ and  $u_i^1$ depend linearly on $\theta$ and $u_i$ as follows:
  \begin{eqnarray}
    &  &\tempBase^1:=\sum_{j=1}^3 \partial _{x_j} \theta \bar{\theta}^j,\\
    &  &\collBase_i^1:=\sum_{j=1}^3 \partial _{x_j} u_i \bar{u}_i^j.
  \end{eqnarray}
  Moreover,  $\bar{\theta}^j$ and $ \bar{u}_i^j$ solve the elliptic problems on the cell:
  (\ref{EP1}) and (\ref{EP2}),  respectively:
  \begin{equation}
    \left\{ \begin{array}{ll}
        \displaystyle
        -\nabla _y\cdot (\kappa(y) \nabla _y \bar{\theta}^j) =  \frac{\partial \kappa}{\partial y_j}
        & \text{ in } Y_1,\\
        \kappa \nabla_y  \bar{\theta}^j\cdot {\nu} = - \kappa\nu_j  & \text{ on } \Gamma, \\[0.2cm]
        \bar{\theta}^j \text{ is periodic in } Y, &
      \end{array} \right. \label{EP1}
  \end{equation}

  \begin{equation}
    \left\{ \begin{array}{ll}
        \displaystyle -\nabla _y \cdot (d_i(y) \nabla _y \bar{u}_i^j) =  \frac{\partial d_i}{\partial y_j}
        & \text{ in } Y_1,\\
        d_i \nabla_y  \bar{u}_i^j\cdot {\nu}  = - d_i \nu_j  &\text{ on } \Gamma, \\[0.2cm]
        \bar{\theta}^j \text{ is periodic in } Y, &
      \end{array} \right. \label{EP2}
  \end{equation}
\end{lemma}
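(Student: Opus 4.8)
The plan is to read off the cell problems directly from the two-scale identities (\ref{two-scale-homogenization-procedure-1b}) and (\ref{two-scale-homogenization-procedure-2b}) and then to reduce them to (\ref{EP1}) and (\ref{EP2}) by exploiting their linearity in the macroscopic gradient.

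\textbf{Isolating the fast-variable problem.} First I would take $\alpha\equiv 0$ in (\ref{two-scale-homogenization-procedure-1b}). Every term carrying a factor $\alpha$ — the time-derivative, the Robin boundary term and the Soret right-hand side — then vanishes, leaving
\begin{equation*}
  \int_{\Omega}\int_{Y_1}\kappa(y)\,(\nabla_x\tempBase+\nabla_y\tempBase^1)\cdot\nabla_y\beta(x,y)\,dy\,dx=0
\end{equation*}
for all $\beta\in C^\infty(\Time\times\Omega;C^\infty_\#(Y))$. Since $\beta$ may carry arbitrary $x$-dependence, a localization argument gives, for a.e. $(t,x)$,
\begin{equation*}
  \int_{Y_1}\kappa(y)\,(\nabla_x\tempBase(t,x)+\nabla_y\tempBase^1(t,x,y))\cdot\nabla_y w(y)\,dy=0\qquad\text{for all }w\in H^1_\#(Y_1).
\end{equation*}
Integrating by parts in $y$ produces the strong cell problem $-\nabla_y\cdot(\kappa(\nabla_x\tempBase+\nabla_y\tempBase^1))=0$ in $Y_1$ with the flux condition $\kappa(\nabla_x\tempBase+\nabla_y\tempBase^1)\cdot\nu=0$ on $\Gamma$ and $Y$-periodicity. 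Repeating this with $\alpha\equiv 0$ in (\ref{two-scale-homogenization-procedure-2b}) gives the same structure for $\collBase_i^1$ with $\kappa$ replaced by $d_i$.

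\textbf{Separation of variables.} For each fixed $(t,x)$ the macroscopic gradient $\nabla_x\tempBase(t,x)$ enters as a constant vector in $y$, so the problem for $\tempBase^1(t,x,\cdot)$ is a linear elliptic cell problem whose data is $\nabla_x\tempBase$. By linearity I would posit $\tempBase^1=\sum_{j}\partial_{x_j}\tempBase\,\bar{\theta}^j$, where $\bar{\theta}^j$ solves the problem obtained by replacing $\nabla_x\tempBase$ with the unit vector $\Runit{j}$. Substituting this form, the $y$-independent factors $\partial_{x_j}\tempBase$ pull out of the $y$-integral and the residual identity is precisely the weak form of (\ref{EP1}); integrating by parts recovers $-\nabla_y\cdot(\kappa\nabla_y\bar{\theta}^j)=\partial\kappa/\partial y_j$ in $Y_1$ and $\kappa\nabla_y\bar{\theta}^j\cdot\nu=-\kappa\nu_j$ on $\Gamma$, which is (\ref{EP1}). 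The identical computation with $d_i$ yields $\collBase_i^1=\sum_{j}\partial_{x_j}u_i\,\bar{u}_i^j$ and (\ref{EP2}).

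\textbf{Main obstacle.} The step needing genuine care is the solvability of the pure-Neumann cell problems (\ref{EP1}) and (\ref{EP2}): constants belong to the kernel, so one must work in the quotient space $H^1_\#(Y_1)/\mathbb{R}$. There coercivity follows from the Poincaré–Wirtinger inequality together with the uniform ellipticity $\kappa\ge\kappa_0>0$ and $d_i\ge d_0>0$ from \nameref{assumption:1}, and Lax–Milgram then gives a solution unique up to an additive constant. The Fredholm compatibility condition holds automatically: applying the divergence theorem to $\kappa\,\Runit{j}$ and using that the fluxes on opposite faces of $\partial Y$ cancel by periodicity gives $\int_{Y_1}\partial\kappa/\partial y_j\,dy=\int_{\Gamma}\kappa\nu_j\,d\sigma$, which is exactly the balance between the interior source and the boundary flux. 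This uniqueness modulo constants is what makes the ansatz of Step~2 well defined, since $\tempBase^1$ is determined only up to a function of $(t,x)$ that does not affect $\nabla_y\tempBase^1$, and it completes the proof.
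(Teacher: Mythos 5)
Your proposal is correct and follows the same route as the paper: choose $\alpha\equiv 0$ in (\ref{two-scale-homogenization-procedure-1b}) and (\ref{two-scale-homogenization-procedure-2b}) to obtain the decoupled identities $\int_{\Omega}\int_{Y_1}\kappa(y)(\nabla\tempBase+\nabla_y\tempBase^1)\cdot\nabla_y\beta=0$ (and its analogue with $d_i$), and then read off the cell problems by linearity. The paper compresses everything after that into ``we can easily get the assertion,'' whereas you supply the localization, the separation-of-variables ansatz, and the Lax--Milgram solvability modulo constants with the compatibility condition --- all of which are the standard details the authors are implicitly invoking.
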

\begin{proof}
  To do this we choose $\alpha=0$ in
  (\ref{two-scale-homogenization-procedure-1b}) and
  (\ref{two-scale-homogenization-procedure-2b}).
  This gives for all $\beta\in C^\infty(\Time\times\Omega;C^\infty_\#(Y))$
  a system of decoupled equations:
  \begin{align}
    \label{two-scale-homogenization-procedure-1c}
    & \int_{\Omega} \int_{Y_1}  \kappa(y) (\nabla \tempBase+\nabla _y\tempBase^1) \nabla _y\beta(x,y)=0,
    \\\label{two-scale-homogenization-procedure-2c}
    & \int_{\Omega} \int_{Y_1} d_i(y) (\nabla \collBase_i+\nabla _y\collBase_i^1) \nabla _y\beta(x,y)=0.
  \end{align}
  From these equations we can easily get the assertion of this lemma.
\end{proof}

\newcommand\tempCell{\bar{\tempBase}}
\newcommand\collCell{\bar{\collBase}}

\subsection{Strong formulation for the limit functions }\label{strong}
Here, we give the strong formulation $(P^0)$ for limit functions $\theta$, $u_i$ and $v_i$ obtained by  Lemma \ref{convergence-lemma}.
\begin{lemma}{\bf (Strong formulation).}
  \label{convergence-lemma-strong}
  Assume \nameref{assumption:1}-\nameref{assumption:2} to hold.  Then
  the triplet $(\theta, u_i, v_i)$ of limit functions of weak solutions to the microscopic model is a
  the weak solution of the  following macroscopic problem:
  \begin{eqnarray*}
    & & \partial_t \theta -  \nabla \cdot ( \mathbb{K}  \nabla \theta)
    + g_0 \frac{|\Gamma_R|}{|Y_1|} \theta
    = \sum_{i=1}^N  (\mathbb{T} \nabla^{\delta}u_i)  \cdot \nabla \theta
    \text{ in } (0,T) \times \Omega, \\
    & & - (\mathbb{K}  \nabla \theta)  \cdot {\nu}   = 0 \text{ on } (0,T) \times \partial \Omega,
  \end{eqnarray*}
  where $\mathbb{K}$ and $\mathbb{T}^i$ are  matrices given by $\mathbb{K} = K_0 \mathbb{I} + (K_{ij})_{ij}$ and
  $\mathbb{T} = T_0 \mathbb{I} + (T_{jk}^i)_{jk}$, respectively, $\mathbb{I}$ is the identity matrix,
  $$ K_0 = \frac{1}{|Y_1|}  \int_{Y_1} \kappa dy,  \quad
  {K}_{ij} = \frac{1}{|Y_1|}   \int_{Y_1} \kappa  \frac{\partial \bar{\theta}^j}{\partial y_i} dy, $$
  $$ {T}_0^i = \frac{1}{|Y_1|}  \int_{Y_1} \tau_i dy,  \quad
  {T}_{jk}^i = \frac{1}{|Y_1|} \int_{Y_1} \tau_i  \frac{\partial \bar{\theta}^j}{\partial y_k} dy,      $$
  and
  \begin{eqnarray*}
    & & \partial_t u_i -  \nabla \cdot ( \mathbb{D}^i \nabla u_i)
    + A_i u_i - B_i v_i   =   (\mathbb{F}^i \nabla u_i ) \cdot \nabla^{\delta} \theta
    + R_i(u)             \text{ in } (0,T) \times \Omega, \\
    & & - (\mathbb{D}^i \nabla u_i)  \cdot {\nu}  = 0 \text{ on } (0,T) \times \partial \Omega,
  \end{eqnarray*}
  where $\mathbb{D}^i$ and  $\mathbb{F}^i$ are matrices defined by $\mathbb{D}^i = D_i  \mathbb{I} + \mathbb{D}_0^i$ and
  $\mathbb{F}^i = F_i \mathbb{I} + \mathbb{F}_0^i$,
  $$ D_i = \frac{1}{|Y_1|}  \int_{Y_1} d_i dy,  \quad
  \mathbb{D}_0^i = (\frac{1}{|Y_1|}   \int_{Y_1} d_i  \partial_{y_k} \bar{u}_i^j dy)_{jk}, $$
  $$ F_i = \frac{1}{|Y_1|}  \int_{Y_1} \delta_i dy,  \quad
  \mathbb{F}^i = (\frac{1}{|Y_1|}  \int_{Y_1} \delta_i  \partial_{y_k} \bar{u}_i^j  dy)_{jk},   $$
  $$ A_i =  \frac{1}{|Y_1|}  \int_{\Gamma} a_i, \quad
  B_i =  \frac{1}{|Y_1|}  \int_{\Gamma} b_i, $$
  and initial conditions:
  \begin{align}
    & \tempBase(0,x)=\tempBase^0(x) &&\text{in }\Omega,\\
    & \collBase_i(0,x)=\collBase_i^0(x) &&\text{in }\Omega,\\
    & \depBase_i(0,x)=\depBase_i^0(x) &&\text{on }\Gamma.
  \end{align}
\end{lemma}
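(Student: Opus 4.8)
The plan is to insert the corrector representations $\tempBase^1=\sum_{j}\partial_{x_j}\tempBase\,\bar{\theta}^j$ and $\collBase_i^1=\sum_j\partial_{x_j}\collBase_i\,\bar{u}_i^j$ from the preceding lemma into the two-scale limit identities (\ref{two-scale-homogenization-procedure-1b})--(\ref{two-scale-homogenization-procedure-3b}), and then to eliminate the fast variable $y$ by averaging over $Y_1$ and $\Gamma$. First I would restrict the test pair to $\beta=0$; this kills every $\nabla_y\beta$ contribution and leaves a weak identity in $\alpha\in C^\infty(\Time\times\Omega)$ alone. No information is discarded, since the complementary choice $\alpha=0$ has already been used to characterise $\bar{\theta}^j$ and $\bar{u}_i^j$ through the cell problems (\ref{EP1}) and (\ref{EP2}).

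With $\beta=0$, the conduction term in (\ref{two-scale-homogenization-procedure-1b}) reads $\frac{1}{|Y_1|}\int_\Omega\int_{Y_1}\kappa(y)(\nabla\tempBase+\nabla_y\tempBase^1)\cdot\nabla_x\alpha$. Writing the $k$-th component of the two-scale gradient as $\partial_{x_k}\tempBase+\sum_j\partial_{x_j}\tempBase\,\partial_{y_k}\bar{\theta}^j$ and carrying out the $Y_1$-average absorbs the prefactor $1/|Y_1|$ into the definition of $\mathbb{K}=K_0\mathbb{I}+(K_{ij})$, turning the term into $\int_\Omega(\mathbb{K}\nabla\tempBase)\cdot\nabla_x\alpha$. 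The same substitution in the Soret term yields $\sum_i\int_\Omega(\mathbb{T}\nabla^\delta\collBase_i)\cdot\nabla\tempBase\,\alpha$, while the Robin contribution is already the macroscopic zeroth-order term $g_0\frac{|\Gamma_R|}{|Y_1|}\int_\Omega\tempBase\,\alpha$. An identical treatment of (\ref{two-scale-homogenization-procedure-2b}), now using $\collBase_i^1$ and (\ref{EP2}), produces the diffusion tensor $\mathbb{D}^i$, the Dufour tensor $\mathbb{F}^i$, and the grain coefficients $A_i$ and $B_i=\frac{1}{|Y_1|}\int_\Gamma b_i$; identity (\ref{two-scale-homogenization-procedure-3b}) is already the weak deposition law for $v_i$.

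After these reductions I would be left, for every $\alpha\in C^\infty(\Time\times\Omega)$, with
\begin{equation*}
\int_\Omega\partial_t\tempBase\,\alpha+\int_\Omega(\mathbb{K}\nabla\tempBase)\cdot\nabla_x\alpha+g_0\tfrac{|\Gamma_R|}{|Y_1|}\int_\Omega\tempBase\,\alpha=\sum_{i=1}^N\int_\Omega(\mathbb{T}\nabla^\delta\collBase_i)\cdot\nabla\tempBase\,\alpha,
\end{equation*}
and the analogous identity for $\collBase_i$. Integrating the flux terms by parts in $x$ and using the density of smooth functions lets me read off the macroscopic equations together with the natural Neumann conditions $-(\mathbb{K}\nabla\tempBase)\cdot\nu=0$ and $-(\mathbb{D}^i\nabla\collBase_i)\cdot\nu=0$ on $(0,T)\times\partial\Omega$; the initial data are inherited from the strong $L^2$-convergence recorded in Lemma~\ref{convergence-lemma}. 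The only genuinely delicate point is the pair of cross-diffusion tensors: since $\nabla^\delta\collBase_i$ and $\nabla^\delta\tempBase$ are $y$-independent, they pass through the $Y_1$-average and contract against the homogenised gradient, so one must keep the contraction pattern $T_{jk}^i=\frac{1}{|Y_1|}\int_{Y_1}\tau_i\,\partial_{y_k}\bar{\theta}^j$ (and its analogue for $\mathbb{F}^i$) consistent with the row/column convention used for $K_{ij}$ and $\mathbb{D}_0^i$. This index bookkeeping, rather than any analytic estimate, is the main obstacle; everything else is a direct cell average.
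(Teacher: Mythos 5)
Your proposal is correct and follows essentially the same route as the paper: set $\beta=0$ in (\ref{two-scale-homogenization-procedure-1b})--(\ref{two-scale-homogenization-procedure-3b}), substitute the corrector representations $\tempBase^1=\sum_j\partial_{x_j}\tempBase\,\bar{\theta}^j$ and $\collBase_i^1=\sum_j\partial_{x_j}\collBase_i\,\bar{u}_i^j$, and average over $Y_1$ to assemble the effective tensors. The paper's own proof is in fact terser than yours (it stops at the averaged weak identities and cites the literature for the rest), so your additional remarks on the integration by parts, the Neumann conditions, and the inheritance of initial data are consistent elaborations rather than deviations.
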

\begin{proof}
  First, choose $\alpha \in C^{\infty}((0,T) \times \Omega)$ and
  $\beta=0$ in (\ref{two-scale-homogenization-procedure-1b}) to obtain:
  \begin{align}
    \label{two-scale-homogenization-procedure-1f}
    & \int_{\Omega}  \partial _t\tempBase \alpha
      + \frac{1}{|Y_1|}\int_{\Omega} \int_{Y_1} \kappa(y)
      (\nabla \tempBase +
      \nabla _y(\sum_{j=1}^3 \partial _{x_j} \tempBase \tempCell^j)
      \nabla _x\alpha(x))\nonumber\\
    & + \cflux_0\frac{|\Gamma_R|}{|Y_1|} \int_{\Omega} \tempBase \alpha =
      \sumN \frac{1}{|Y_1|} \int_{\Omega} \int_{Y_1}
      \tau(y) \nabla ^\delta\collBase_i\cdot
      (\nabla \tempBase
      +      \nabla _y(\sum_{j=1}^3 \partial _{x_j} \tempBase \tempCell^j) \alpha.
  \end{align}
  Integrating (\ref{two-scale-homogenization-procedure-1f}) w.r.t. $y$ leads to:
  \begin{align}
    \label{two-scale-homogenization-procedure-1g}
    & \int_{\Omega}  \partial _t \tempBase \alpha
      +\int_{\Omega} \Cond \nabla \tempBase \nabla _x\alpha
      + \cflux_0\frac{1}{|Y_1|} \int_{\Omega} \int_{Y_1}   \tempBase  \alpha
      =
      \sumN \int_{\Omega}
      \Soret \nabla ^\delta\collBase_i \cdot
      \nabla \tempBase
      \alpha.
  \end{align}
  We can similarly derive from (\ref{two-scale-homogenization-procedure-2b}) that:
  \begin{align}
    \label{two-scale-homogenization-procedure-2g}
    & \int_{\Omega} \partial _t\collBase_i \alpha
      + \int_{\Omega} \mathbb{D}^i \nabla \collBase_i \nabla _x\alpha
      + \int_{\Omega}  (\DCA_i\collBase_i-\DCB_i\depBase_i)\alpha
      =
      \int_{\Omega} \Dufour^i \nabla ^\delta\tempBase\cdot \nabla \collBase_i\alpha
      + \int_{\Omega}  R_i(\collBase) \alpha,\\
    \label{two-scale-homogenization-procedure-3g}
    &\int_{\Omega} \partial _t\depBase_i \alpha =
      \int_{\Omega} (\DCA_i\collBase_i-\DCB_i\depBase_i)\alpha.
  \end{align}

  See also \cite{marciniak2008derivation} and \cite{fatima2012sulfate} for a similar application
  of the two-scale convergence method.
\end{proof}

\ifdefined\included\else
\section*{Acknowledgments}
AM and OK  gratefully acknowledge financial support by the European Union through
the Initial Training Network \emph{Fronts and Interfaces in Science and Technology}
of the Seventh Framework Programme (grant agreement number 238702).
\printbibliography
\end{document}